\title{High-dimensional expansion and soficity of groups}
\author{Lukas Gohla}
\address{Lukas Gohla, TU Dresden, 01062 Dresden, Germany}
\email{lukas.gohla@posteo.net}
\author{Andreas Thom}
\address{Andreas Thom, TU Dresden, 01062 Dresden, Germany}
\email{andreas.thom@tu-dresden.de}
\theoremstyle{plain}
\newtheorem{theorem}{Theorem}[section]
\newtheorem{definition}[theorem]{Definition}
\newtheorem{lemma}[theorem]{Lemma}
\newtheorem{corollary}[theorem]{Corollary}
\newtheorem{question}[theorem]{Question}
\theoremstyle{definition}
\newtheorem{remark}[theorem]{Remark}
\newcommand{\vertiii}[1]{{|\kern-0.2ex|\kern-0.2ex| #1 
    |\kern-0.2ex|\kern-0.2ex|}}
\begin{document}
\begin{abstract}
For $d \geq 4$ and $p$ a sufficiently large prime, we construct a lattice $\Gamma \leq {\rm PSp}_{2d}(\mathbb Q_p),$ such that its universal central extension cannot be sofic if $\Gamma$ satisfies some weak form of stability in permutations. In the proof, we make use of high-dimensional expansion phenomena and, extending results of Lubotzky, we construct new examples of cosystolic expanders over arbitrary finite abelian groups.
\end{abstract}

\maketitle

\tableofcontents

\section{Introduction}

Eversince the influential paper of Gromov \cite{MR1694588} and subsequent work of Weiss \cite{MR1803462}, the quest for a non-sofic group has inspired mathematicians. In order to show that a group is \emph{sofic}, one has to provide sufficiently rich almost representations of the group in permutations. The competing notion of \emph{stability} requires that every sufficiently accurate almost representation in permutations is close to an actual permutation representation. Thus, it is easy to see that any group which is both sofic and stable must be residually finite and, at least theoretically, this opens a route to the construction of non-sofic groups, see \cite{MR2500002}. Unfortunately, stability is a rare phenomenon, even though there is some indication that lattices in algebraic groups of rank at least $3$ share the right kind of rigidity in order to ensure at least some form of stability. This was first discovered in \cite{MR4080477} and related to high-dimensional expansion, see also \cite{MR3966829}, but the techniques are currently not able to produce stability results for almost representations in permutations, see the discussion in \cite{chapman2023stability}. 

In this article we study groups $\Gamma$, particular torsionfree lattices in the algebraic group ${\rm PSp}_{2d}(\mathbb Q_p)$ for $d\geq 4$ and $p$ large. These groups are residually finite and hence sofic, but admit finite central extensions $\tilde{\Gamma}$ which are not residually finite anymore. This phenomenon was first discovered by Deligne \cite{MR507760} in work on the congruence subgroup problem. Our main result, Theorem \ref{thm:main}, says that these central extensions $\tilde{\Gamma}$ cannot be sofic if the group $\Gamma$ is stable in a certain sense, see Section \ref{sec:stable}.

On a more conceptual level, which might be interesting in its own right, we introduce cohomological invariants that obstruct containment of p.m.p.\ actions of groups. These obstructions can be enhanced in the presence of cosystolic inequalities to obstruct also weak containment. This is exactly the route our proof takes: we show that for any sofic approximation of the finite central extension $\tilde \Gamma$, the induced sofic approximation of the lattice $\Gamma$ admits a limit action which is not weakly contained in finite actions of the lattice. This contradicts a very weak form of stability that we introduce and discuss in Section \ref{sec:stable}. We call a group \emph{stable in finite actions} if any partition of a suffiently good sofic approximation can be modelled (in the spirit of Kechris' notion of weak containment) in a finite action, see Definition \ref{def:stable}. This seems much weaker than any other notion of stability that has been studied so far.

All this is formulated in our main result, which is Theorem \ref{thm:main}, and should be compared with results of Bowen--Burton \cite{MR4105530}, see the discussion in Section \ref{sec:proof}. It can also be compared with a result of Dogon in the hyperlinear setting, see \cite{dogon}.

The paper is organized as follows. In Section \ref{sec:2}, we recall basics on measured Boolean algebras, weak containment of actions, metric group cohomology, Bruhat-Tits buildings and high-dimensional expanders. In Section \ref{sec:3}, we recall the definitions of soficity and various notions of stability. We introduce a new notion of stability, weaker than most notions that have been studied so far, and prove our main theorem. Finally, we review in Section \ref{Chapter:Candidate} the construction of a group satisfying all conditions in our main theorem.

\vspace{0.2cm}
The results of this article also appeared in the doctoral thesis of the first-named author, see \cite{thesisgohla}, with some text overlap.

\section{Group cohomology and applications} \label{sec:2}

\subsection{Measured Boolean algebras}

Since our setup requires taking metric ultraproducts and we would like to avoid working with non-standard measure spaces, we will quickly explain the setup of measured Boolean algebras and metric rings. This way, we can focus on the algebraic aspects of our reasoning. We will be rather brief in introducing metric ultraproducts for measured Boolean algebras and their associated abelian groups of $A$-valued functions for some finite abelian group $A$.

A measured Boolean algebra $({\rm P},\mu)$ is a Boolean algebra $({\rm P},\cap,\cup,\neg,0,1)$ with a measure $\mu \colon {\rm P} \to [0,1]$, i.e., a map such that
$$\mu(X \cup Y) + \mu(X \cap Y) = \mu(X) + \mu(Y) \quad \mbox{and} \quad \mu(0)=0, \ \mu(1)=1.$$
We say that $(\rm P,\mu)$ is \emph{complete} if ${\rm P}$ is complete with the metric $d(X,Y) = \mu(X \cup Y) - \mu(X \cap Y).$ The measure $\mu$ is said to be \emph{faithful}, if $\mu(X)=0$ only when $X=0$. We typically assume that $({\rm P},\mu)$ is complete and that $\mu$ is faithful, even though we will not need these conditions. A typical example for $({\rm P},\mu)$ to keep in mind is the measure algebra of measurable subsets of a standard probability space up to measure zero. In particular, this includes the power set of a finite set with the normalized counting measure.

Let $A$ be a finite abelian group. We write ${\rm P}(A)$ for the abelian group generated by symbols $X \otimes a$ for $X \in {\rm P}$ and $a \in A$, subject to the identity
$$X \otimes a + Y \otimes b = (X \setminus Y) \otimes a + (Y \setminus X) \otimes b + (X \cap Y) \otimes (a+b),$$
for all $X,Y \in {\rm P}$ and $a,b \in A.$
If $A$ is a finite commutative ring, then ${\rm P}(A)$ carries a natural multiplication which is defined via the formula
$(X \otimes a)(Y \otimes b) = (X \cap Y) \otimes (ab).$ It is easy to see that every element $x \in {\rm P}(A)$ can be written uniquely in the form 
$$x= \sum_{a \in A, a \neq 0} X_a \otimes a$$ with $X_a \cap X_b = \varnothing$ if $a\neq b.$
We denote the natural inclusion $A$ into ${\rm P}(A)$, given by $a \mapsto 1 \otimes a$, by $\theta^{\rm P} \colon A \to {\rm P}(A).$
Note that one can extend the measure $\mu$ to ${\rm P}(A)$ via the formula:
$$\mu(x) = \sum_{a \in A, a \neq 0} \mu(X_a) \in [0,1].$$
This turns ${\rm P}(A)$ into a metric abelian group via the formula $d(x,y)=\mu(x-y).$

Let $I$ be a set and $({\rm P}_j,\mu_j)_{j \in I}$ be a family of measured Boolean algebras indexed by $I$. Let $\mathcal U \in \beta I$ be a non-principal ultrafilter. We set
$${\rm P}_{\mathcal U} = \prod_{j \in I} {\rm P}_j / \sim$$
with $(X_j)_j \sim (Y_j)_j$ if and only if $\lim_{j \to {\mathcal U}} d_i(X_j,Y_j)=0.$ The equivalence class of $(X_j)_j$ is denoted by $[X_j]_{j}$. Note that ${\rm P}_{\mathcal U}$ is again a measured Boolean algebra with measure
$$\mu_{\mathcal U}([X_j]_j) = \lim_{j \to {\mathcal U}} \mu_j(X_j).$$ The measured Boolean algebra $({\rm P}_{\mathcal U},\mu_{\mathcal U})$ is called the metric ultraproduct. In a similar way, we can consider the metric ultraproduct $({\rm P}(A)_{\mathcal U},\mu_{\mathcal U})$ of metric abelian groups $({\rm P}_j(A),\mu)$ and note that ${\rm P}_{\mathcal U}(A)={\rm P}(A)_{\mathcal U}$ in a natural way.

Let $\Gamma$ be a countable discrete group. We consider measured Boolean algebras with a measure preserving $\Gamma$-action and in this case we speak of a measured $\Gamma$-Boolean algebra. For a p.m.p.\ preserving $\Gamma$-action $(X,\mu)$, we denote by ${\rm M}(X,\mu)$ its measure algebra, which is naturally a measured $\Gamma$-Boolean algebra. If $A$ is a finite abelian group, then ${\rm P}(A)$ is naturally a $\mathbb Z[\Gamma]$-module whenever ${\rm P}$ is a measured $\Gamma$-Boolean algebra. In case of finite measured $\Gamma$-Boolean algebras, we simply speak of \emph{finite actions}. We denote the family of finite transitive actions by $\mathcal P_{\rm tra}$ and the family of finite actions by $\mathcal P_{\rm fin}.$

\begin{definition}
Let $({\rm P},\mu)$ and $({\rm Q},\nu)$ be measured $\Gamma$-Boolean algebras with measure preserving $\Gamma$-actions. We say that $({\rm P},\mu)$ is contained in $({\rm Q},\nu)$ if there exists a measure-preserving and $\Gamma$-equivariant homomorphism of Boolean algebras from $({\rm P},\mu)$ to $({\rm Q},\nu)$.
\end{definition}

Clearly, any metric ultraproduct of measured $\Gamma$-Boolean algebras inherits a natural measure preserving $\Gamma$-action from the factors. We sometimes simply speak of the \emph{limit action} if the context is clear. Using the notion of metric ultraproduct we can introduce the fundamental notion of \emph{weak containment}.

\begin{definition}
We say that a measured $\Gamma$-Boolean algebra $({\rm P},\mu)$ is \emph{weakly contained} in a family of measured $\Gamma$-Boolean algebras if $({\rm P},\mu)$ is contained in a metric ultraproduct of measured $\Gamma$-Boolean algebras from that family.
\end{definition}

This definition of weak containment corresponds to the notion of weak containment of measure preserving actions on probability spaces, see \cites{kechris, MR4138908}.

\subsection{Metric group cohomology} \label{sec:coho}

We refer to Brown \cite{MR1324339} for the basics on group cohomology. Let us just recall some basic notation. Let $B\Gamma$ be a classifying space for $\Gamma$, i.e., the $\Gamma$-quotient of a contractible simplicial complex $E\Gamma$ with a free and simplicial $\Gamma$-action. Let $M$ be a $\mathbb Z\Gamma$-module. We denote by ${\rm C}^*(B\Gamma,M) \coloneqq \hom_{\mathbb Z \Gamma}({\rm C}_*(E \Gamma),M)$ the cochain complex with differentials
$$\delta_i \colon {\rm C}^i(B\Gamma,M) \to {\rm C}^{i+1}(B\Gamma,M).$$
We set as usual $${\rm Z}^i(B\Gamma,M) = \ker \delta_i, \quad {\rm B}^i(B\Gamma,M) = {\rm im} \ \! \delta_{i-1}$$ and define the group cohomology with coefficients in $M$ as
$${\rm H}^i(B\Gamma,M) = \frac{{\rm Z}^i(B\Gamma,M)}{{\rm B}^i(B\Gamma,M)}.$$

Let ${\rm P}$ be a measured $\Gamma$-Boolean algebra, $A$ be a finite abelian group and $n \in \mathbb N$. Then, there exists a natural homomorphism
$$\theta^{\rm P}_* \colon {\rm H}^i(B\Gamma,A) \to {\rm H}^i(B\Gamma,{\rm P}(A)),$$
where ${\rm P}(A)$ and $\theta^{\rm P}$ as in the previous section. 
Let $\Gamma$ be of finite type and $B\Gamma$ a finite model for its classifying space and let $B\Gamma(i)$ denote the set of $i$-simplices. Then, fixing a faithful probability measure $\nu_i$ on $B\Gamma(i)$, we can endow ${\rm C}^i(B\Gamma,{\rm P}(A))$ with a natural metric that measures non-triviality of the co-chain, i.e. we define a length function by the formula
$$\|c\|_{\mu,\nu}\coloneqq \sum_{x \in B\Gamma(i)}  \mu(c(x)) \nu_i(x),$$
where we view $c \in {\rm C}^i(B\Gamma,{\rm P}(A)) = \hom_{\mathbb Z \Gamma}({\rm C}_i(E \Gamma),{\rm P}(A))$ simply as a function $c \colon B\Gamma(i) \to {\rm P}(A)$ after choosing a section of the map $E\Gamma(i) \to B\Gamma(i)$. Note that the length is independent of this choice by $\Gamma$-invariance of the $\mu.$
Note also that $\delta_i \colon {\rm C}^i(B\Gamma,M) \to {\rm C}^{i+1}(B\Gamma,M)$ is Lipschitz with respect to the length functions introduced above with a Lipschitz constant just depending on the cell structure of $B\Gamma$ and the measures $\nu_i$ and $\nu_{i+1}$.

There is a natural quotient pseudo-metric on ${\rm H}^i(B\Gamma,{\rm P}(A))$ which we also denote by $$\vertiii{.}_{\mu,\nu} \colon {\rm H}^i(B\Gamma,{\rm P}(A)) \to [0,1].$$
Note that this pseudo-metric depends on the choice of finite model for $B \Gamma$ and the measure $\nu_i$, but it is unique up to bi-Lipschitz equivalence of pseudo-metrics. (In order to avoid the ambiguity, we fix a finite model for $B\Gamma$ and choices for the measures $\nu_i$ for the rest of the text and suppress the dependence on the $\nu_i$'s in the notation.)
In particular 
$${\rm NH}^i(B\Gamma,{\rm P}(A)) := \{ \alpha \in {\rm H}^i(B\Gamma,{\rm P}(A)) \mid \vertiii{\alpha} = 0 \}$$
is well-defined and independent of a choice of a finite model. We also define the reduced cohomology
$${\rm H}_{\rm red}^i(B\Gamma,{\rm P}(A)) = \frac{{\rm H}^i(B\Gamma,{\rm P}(A))}{{\rm NH}^i(B\Gamma,{\rm P}(A))}.$$

There are two natural choices for the measures on $B\Gamma(i)$
that we will briefly discuss.

\begin{definition}
\label{ExNormOnCW2}
Let $X$ be a finite simplicial complex of dimension $d$ and $X(i)$ be the set of $i$-simplices. We define
$$\mu(\sigma) \coloneqq \frac{|\{ \tau \in X(d) \mid \sigma \subset \tau \}|}{\binom{d+1}{i+1} \cdot |X(d)|}.$$
\end{definition}

This is the most common choice in the literature. Note however, that this particular choice has, a priori, the problem of $i$-simplices which are ignored in case an $i$-simplex is not contained in any $d$-dimensional cell. In the sequel, we will always assume that all simplicial complexes are finite dimensional and \emph{pure}, i.e. every $i$-simplex is contained in at least one simplex of maximal dimension. This is true for buildings, which are the only relevant examples which we discuss.

The other common choice for the weights  differs just by a normalization factor which depends on the dimension.
\begin{definition}
\label{ExNormOnCW3}
Let $X$ be a finite simplicial complex of dimension $d$ and $X(i)$ be the set of $i$-simplices. We define
$$m(\sigma ) \coloneqq (d-i)! \cdot |\{ \tau \in X(d) \mid \sigma \subseteq \tau \}|.$$
\end{definition}
This choice comes with the additional property that the weight of a $k$-dimensional simplex is given as the sum of all weights of the $(k+1)$-dimensional simplices it is contained in.

\subsection{Shapiro's lemma in the metric context}

In this section we want to tie together the metric and homological aspects of co-induction. Let $B\Gamma$ be a finite classifying space for $\Gamma$ and $\Lambda \leq \Gamma$ be a subgroup of finite index. A natural model for $B\Lambda$ is the finite covering $E\Gamma/\Lambda$ and we will fix this model for $B\Lambda$. Given any probability measure on $B\Gamma(i)$, there is a natural probability measure on $B\Lambda(n)$, which arises as the normalized equidistribution on the fibers of the covering map $B\Lambda \to B\Gamma.$ The well-known Shapiro's lemma says that the cohomology of $\Lambda$ with coefficients in a $\mathbb Z\Lambda$-module $M$ can be computed in terms of the cohomology of $\Gamma$ with respect to the co-induced module $\hom_{\Lambda}(\mathbb Z\Gamma,M).$ If $M=A$ is just an abelian group with trivial $\Lambda$-action, then $\hom_{\Lambda}(\mathbb Z\Gamma,M) = {\rm map}(\Gamma/\Lambda,A) = {\rm P}^{\Gamma/\Lambda}(A),$ where ${\rm P}^{\Gamma/\Lambda}$ denotes the Boolean algebra of subset of $\Gamma/\Lambda$. The following lemma states that the Shapiro isomorphism is in fact isometric when we consider ${\rm P}^{\Gamma/\Lambda}$ with the normalized counting measure and with the choices of measures on cells as discussed above.

\begin{lemma}[Shapiro] \label{lem:shapiro}
Let $\Lambda \leq \Gamma$ be a subgroup. There is a natural isometric isomorphism
$${\rm H}^i(B\Lambda,A) \stackrel{\kappa}{\to} {\rm H}^i(B\Gamma,{\rm P}^{\Gamma/\Lambda}(A))$$
such that $$\kappa([c]|_{\Lambda}) = \theta^{\Gamma/\Lambda}_*([c]) \in {\rm H}^i(B\Gamma,{\rm P}^{\Gamma/\Lambda}(A))$$
for all $[c] \in {\rm H}^i(B\Gamma,A)$.
\end{lemma}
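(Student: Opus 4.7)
The plan is to produce a bijective chain isomorphism $\kappa$ at the cochain level between ${\rm C}^i(B\Lambda,A)$ and ${\rm C}^i(B\Gamma,{\rm P}^{\Gamma/\Lambda}(A))$ which is an isometry with respect to the length functions introduced in Section~\ref{sec:coho}; this will then descend to the desired isometric isomorphism on cohomology.

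Concretely, I would first fix a set $R \subset \Gamma$ of representatives for $\Gamma/\Lambda$. A cochain $c \in {\rm C}^i(B\Lambda,A) = \hom_{\mathbb Z\Lambda}({\rm C}_i(E\Gamma),A)$ is a $\Lambda$-equivariant function on $E\Gamma(i)$. Define $\kappa(c) \colon E\Gamma(i) \to {\rm P}^{\Gamma/\Lambda}(A)$ by letting $\kappa(c)(\tilde\sigma)$ be the $A$-valued function on $\Gamma/\Lambda$ whose value at $r\Lambda$ (for $r \in R$) is $c(r^{-1}\tilde\sigma)$. Using triviality of the $\Lambda$-action on $A$, one checks that $\kappa(c)$ is independent of the choice of representatives, that $\kappa(c)$ is $\Gamma$-equivariant, and that $\kappa$ commutes with the coboundary. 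The map $\kappa$ is bijective at cochain level, with inverse $F \mapsto (\tilde\sigma \mapsto F(\tilde\sigma)(e\Lambda))$; this is the classical Shapiro construction.

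Next I would verify that $\kappa$ is isometric on cochains. By construction the measure $\nu^\Lambda$ on $B\Lambda(i)$ assigns the value $\nu^\Gamma(\sigma)/[\Gamma:\Lambda]$ to each of the $[\Gamma:\Lambda]$ lifts $\tilde\sigma$ of $\sigma \in B\Gamma(i)$. Since $\mu$ on ${\rm P}^{\Gamma/\Lambda}$ is the normalized counting measure,
$$
\|\kappa(c)\|_{\mu,\nu^\Gamma} = \sum_{\sigma \in B\Gamma(i)} \frac{|\{\, r\Lambda : c(r^{-1}\tilde\sigma) \neq 0 \,\}|}{[\Gamma:\Lambda]}\,\nu^\Gamma(\sigma) = \sum_{\tilde\tau \in B\Lambda(i),\, c(\tilde\tau)\neq 0} \nu^\Lambda(\tilde\tau) = \|c\|_{\mu,\nu^\Lambda},
$$
where the reindexing uses that as $r$ ranges over $R$ and $\sigma$ ranges over $B\Gamma(i)$, the elements $r^{-1}\tilde\sigma$ enumerate $B\Lambda(i)$ exactly once. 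Since $\kappa$ is a bijective chain isomorphism that is isometric on cochains, it carries coboundaries to coboundaries isometrically, and the induced map on cohomology is a pseudo-isometric isomorphism with respect to $\vertiii{\cdot}_{\mu,\nu}$.

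Finally, the identity $\kappa([c]|_\Lambda) = \theta^{\Gamma/\Lambda}_*([c])$ is a direct computation: for a $\Gamma$-equivariant cocycle $c$, one has $\kappa(c|_\Lambda)(\tilde\sigma)(r\Lambda) = c(r^{-1}\tilde\sigma) = r^{-1} c(\tilde\sigma) = c(\tilde\sigma)$ since $\Gamma$ acts trivially on $A$, so $\kappa(c|_\Lambda)(\tilde\sigma)$ is the constant function $\theta^{\Gamma/\Lambda}(c(\tilde\sigma))$. The one step requiring care is the measure-theoretic bookkeeping in the isometry calculation above, where the factor $1/[\Gamma:\Lambda]$ from the normalized counting measure on $\Gamma/\Lambda$ must cancel against the equidistribution factor in $\nu^\Lambda$; everything else is a formal consequence of the classical Shapiro lemma.
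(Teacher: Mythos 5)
Your proof is correct and is exactly the verification the paper leaves implicit: the lemma is stated there without proof, the intended argument being the classical cochain-level Shapiro isomorphism combined with the observation that the normalized counting measure on $\Gamma/\Lambda$ and the fiberwise-equidistributed measure on $B\Lambda(i)$ make it isometric. Your explicit formula $\kappa(c)(\tilde\sigma)(r\Lambda)=c(r^{-1}\tilde\sigma)$, the cancellation of the two factors of $[\Gamma:\Lambda]$ in the length computation, and the evaluation of $\kappa([c]|_{\Lambda})$ as the constant function $\theta^{\Gamma/\Lambda}(c(\tilde\sigma))$ are precisely the details that need checking, and they all check out.
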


This lemma has particularly interesting consequences when there are infinitely many subgroups of finite-index and the metric properties of cohomology groups and groups of cocycles and coboundaries can be controlled uniformly. 

\subsection{Bruhat-Tits buildings}

Let us continue by recalling a particular family of classifying spaces. For background on linear algebraic groups and basic notions we refer to \cite{borel}. The standard reference for the theory of buildings is \cite{AB}.

\begin{definition}
Let $G$ be a reductive algebraic group over $\mathbb{Q}_{p}$. Then there exists a simplicial complex $\mathcal{B}(\mathbf{G})$ such that $\mathbf{G}(\mathbb Q_p)$ acts continuously on $\mathcal{B}(\mathbf{G})$ by simplicial automorphisms and
\begin{enumerate}[$(i)$]
\item the action of $\mathbf{G}(\mathbb Q_p)$ is proper,
\item $\mathcal{B}(\mathbf{G})$ is contractible,
\item $\mathcal{B}(\mathbf{G})$ is of dimension ${\rm rk}_{\mathbb{Q}_{p}} G <\infty$ and
\item $\mathcal{B}(\mathbf{G})$ is locally finite, i.e. all proper links are finite.
\end{enumerate}
$\mathcal{B}(\mathbf{G})$ will be called the \emph{Bruhat-Tits building of $\mathbf{G}$}. \label{DefBTbuilding}
\end{definition}

The following lemma is well-known.

\begin{lemma} \label{lem:bruhat}
Let $\Gamma$ be a torsion-free lattice of $\mathbf{G}(\mathbb Q_p)$. Then $\mathcal{B} (\mathbf{G})/\Gamma$ is a finite classifying space of $\Gamma$. \label{TheoBTisClassSpace}
\end{lemma}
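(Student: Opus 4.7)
The plan is to establish three things about $\mathcal{B}(\mathbf{G})/\Gamma$: that $\Gamma$ acts freely on the building without inversions, that the quotient is aspherical with fundamental group $\Gamma$, and that it is finite as a simplicial complex.

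For freeness, I would start from Definition \ref{DefBTbuilding}(i): the $\mathbf{G}(\mathbb Q_p)$-action is proper, so for any vertex $v \in \mathcal{B}(\mathbf{G})$ the $\mathbf{G}(\mathbb Q_p)$-stabilizer of $v$ is compact, and hence its intersection with the discrete subgroup $\Gamma$ is finite. Torsion-freeness of $\Gamma$ forces this stabilizer to be trivial, so $\Gamma$ acts freely on vertices. To upgrade this to the absence of inversions, any $\gamma \in \Gamma$ that stabilizes a simplex $\sigma$ setwise permutes the finite vertex set of $\sigma$ by a permutation of some order $k$; then $\gamma^{k}$ fixes every vertex of $\sigma$, hence $\gamma^{k} = 1$, and by torsion-freeness $\gamma = 1$. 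Consequently the setwise $\Gamma$-stabilizer of every simplex is trivial and $\mathcal{B}(\mathbf{G}) \to \mathcal{B}(\mathbf{G})/\Gamma$ is a genuine covering of simplicial complexes with deck transformation group $\Gamma$.

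Combining this freeness with contractibility of $\mathcal{B}(\mathbf{G})$ (Definition \ref{DefBTbuilding}(ii)), standard covering-space theory identifies $\mathcal{B}(\mathbf{G})$ with the universal cover of $\mathcal{B}(\mathbf{G})/\Gamma$. Hence $\pi_1(\mathcal{B}(\mathbf{G})/\Gamma) \cong \Gamma$ and all higher homotopy groups vanish, so $\mathcal{B}(\mathbf{G})/\Gamma$ is a $K(\Gamma,1)$, i.e.\ a classifying space for $\Gamma$.

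For finiteness, I would invoke that the lattices of interest in this paper are uniform, so $\Gamma \backslash \mathbf{G}(\mathbb Q_p)$ is compact. Since $\mathbf{G}(\mathbb Q_p)$ acts cocompactly on $\mathcal{B}(\mathbf{G})$ (there are only finitely many $\mathbf{G}(\mathbb Q_p)$-orbits of chambers in a Bruhat-Tits building), this implies $\mathcal{B}(\mathbf{G})/\Gamma$ is compact as well. Combined with local finiteness (Definition \ref{DefBTbuilding}(iv)) and finite dimensionality (iii), a compact locally finite finite-dimensional simplicial complex has only finitely many simplices in total, so $\mathcal{B}(\mathbf{G})/\Gamma$ is finite. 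The main subtlety I expect is the no-inversions check that underlies the very meaning of ``simplicial quotient''; this is handled cleanly by the permutation-of-vertices argument above using torsion-freeness, which sidesteps the need to pass to a barycentric subdivision.
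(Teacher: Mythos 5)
The paper offers no proof of this lemma (it is stated as ``well-known''), so there is nothing to compare against; your argument is the standard one and is correct in substance: properness plus discreteness gives finite vertex stabilizers, torsion-freeness makes them trivial, the power trick kills inversions, and covering-space theory over the contractible building gives a $K(\Gamma,1)$. Two points deserve tightening. First, for finiteness you appeal to ``the lattices of interest in this paper are uniform'', which proves less than the lemma as stated (an arbitrary torsion-free lattice). This is easily repaired: by Tamagawa's theorem every lattice in a semisimple $p$-adic group in characteristic zero is cocompact; alternatively, and more self-containedly, the covolume of $\Gamma$ is (up to normalization) $\sum_{C} |\Gamma_C|^{-1}$ over chambers $C$ of $\mathcal{B}(\mathbf{G})/\Gamma$, and since all stabilizers are trivial, finiteness of the covolume directly forces finitely many chamber orbits, hence (by pureness and finite dimension) finitely many simplices. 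Second, your claim to sidestep barycentric subdivision is slightly optimistic: freeness plus absence of inversions makes the quotient a finite CW complex and the projection a covering, but for the quotient to be an honest simplicial complex one also needs that no two vertices of a simplex lie in the same $\Gamma$-orbit and that simplices are determined by their vertex orbits; for non-simply-connected groups such as ${\rm PSp}_{2d}$ the lattice need not preserve vertex types, so this regularity can fail. One barycentric subdivision (which changes neither finiteness nor the homeomorphism type) restores it, so the conclusion stands, but the subdivision is not entirely dispensable.
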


Bruhat-Tits buildings are sources of all kind of rigidity phenomena as will be discussed in the next section.

\subsection{Cosystolic inequalities and expansion}
\label{sec:highex}

In this section, we follow the exposition of Lubotzky \cite{MR3966743}, see also \cites{MR3536553, kaufman2018cosystolic, LMM}. It is natural to wonder in what sense the metric and the algebraic structure on the cohomology groups discussed in Section \ref{sec:coho} are compatible. One immediate requirement is that we would want that ${\rm H}^i(B\Gamma,{\rm P}(A))$ is Hausdorff, i.e. the pseudo-metric is in fact a metric. However, it turns out that it is of even greater importance to require that the induced metric is discrete. Another subtle point is the comparison of the quotient pseudo-metric and the subspace metric on the subgroup $${\rm im}\ \delta_{i-1} \subset {\rm C}^i(B\Gamma,{\rm P}(A)).$$

\begin{definition} \label{hexdef} Let $\varepsilon,\mu>0$ and let $\Gamma$ be a group of finite type and fix a finite model for $B\Gamma$. Let $\mathcal P$ be a family of measured $\Gamma$-Boolean algebras  and $A$ be a finite abelian group. We say that $\Gamma$ satisfies
\begin{enumerate}[$(i)$]
\item \label{EqCosystolicIneq}
a \emph{cosystolic inequality} in dimension $i$ with respect to $\mathcal P$ and $A$, if there exists $\varepsilon>0$, such that
\begin{equation*} 
\vertiii{\alpha}_i \geq \varepsilon
\end{equation*}
for all non-zero $\alpha \in {\rm H}^{i}(B\Gamma,{\rm P}(A))$ and all $({\rm P},\mu) \in \mathcal P,$ and
\item \label{EqExpanderIneq}
\emph{expansion} in dimension $i$ with respect to $\mathcal P$ and $A$ if there exists $\varepsilon>0$, such that
\begin{equation*}
\| \delta_i (c)\|_{i+1} \geq \varepsilon \inf_{z\in {\rm Z}^{i}(B\Gamma,{\rm P}(A))} \| c+z\|_{i}, 
\end{equation*}
for all $c \in {\rm C}^{i}(B\Gamma,{\rm P}(A)) $ and all ${\rm P} \in \mathcal P.$
\end{enumerate}
\end{definition}

Expansion is visible only in the inner workings of the co-chain complex and says that the quotient pseudo-metric and the subspace metric are Lipschitz equivalent on ${\rm im} \ \! \delta_i$. Note that by the discussion in Section \ref{sec:coho} these properties are independent of the choice of the finite model $B\Gamma$, so in fact they are properties of $\Gamma.$

\begin{definition}
We say that a $d$-dimensional finite complex with fundamental group $\Gamma$ is a cosystolic expander with respect to $\mathcal P$ and $A$, if it satisfies a cosystolic inequality and expansion in dimensions $0 \leq i \leq d-1.$
Moreover, we call a cosystolic expander a coboundary expander, if in addition its cohomology vanishes in dimensions $1 \leq i \leq d-1$
\end{definition}

A typical result says that the $(d-1)$-skeleton of a finite quotient of a $d$-dimensional Bruhat-Tits building is (under certain conditions) a cosystolic expander in a uniform way, i.e., we need that $\mu,\varepsilon$ above do not depend on the particular quotient. We conclude that the fundamental group of such a quotient satisfies cosystolic inequalities and expansion in dimensions $0 \leq i \leq d-2$ with respect to the family of finite actions. More precisely, we will obtain the following result:

\begin{theorem} \label{CorPSpExpansion} Let $d \geq 4$ and $p$ large enough.
Let $\Gamma$ be a torsionfree lattice in ${\rm PSp}_{2d} (\mathbb{Q}_{p})$ and let $A$ be a finite abelian group. Then, $\Gamma$ satisfies a cosystolic inequality and expansion with respect to the family of finite transitive $\Gamma$-actions and $A$ in dimensions $0 \leq i \leq d-2.$
\end{theorem}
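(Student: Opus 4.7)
The strategy is to combine Shapiro's lemma (Lemma \ref{lem:shapiro}) with the local-to-global paradigm for cosystolic expansion of finite quotients of Bruhat-Tits buildings, and to extend the available $\mathbb{F}_2$-coefficient results to arbitrary finite abelian groups $A$.

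First step: by Lemma \ref{lem:bruhat}, $X_\Lambda := \mathcal{B}({\rm PSp}_{2d})/\Lambda$ is a finite model for $B\Lambda$ whenever $\Lambda \leq \Gamma$ has finite index. A finite transitive $\Gamma$-action is determined by such a subgroup via $\Gamma/\Lambda$, and the associated measured Boolean algebra is ${\rm P}^{\Gamma/\Lambda}$. Lemma \ref{lem:shapiro} identifies ${\rm H}^i(B\Gamma, {\rm P}^{\Gamma/\Lambda}(A))$ isometrically with ${\rm H}^i(B\Lambda, A)$, provided the measures on the cells of $B\Lambda$ are the ones induced from $B\Gamma$ by the covering map. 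Therefore, the two inequalities of Definition \ref{hexdef} with respect to $\mathcal P_{\rm tra}$ and $A$ are equivalent to uniform cosystolic inequality and expansion for the finite complexes $X_\Lambda$ with coefficients in the trivial $\Lambda$-module $A$, uniformly in $\Lambda$.

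Second step: for $p$ sufficiently large, results of Kaufman--Kazhdan--Lubotzky and Evra--Kaufman (as summarized in \cite{MR3966743}) produce such uniform cosystolic expansion of the $(d-1)$-skeleta of the $X_\Lambda$ with $\mathbb{F}_2$-coefficients via a local-to-global principle. The links in $X_\Lambda$ depend only on the Bruhat-Tits building---they are spherical buildings of proper parabolic subgroups of ${\rm PSp}_{2d}$ over the residue field---and hence are $\Lambda$-independent; for $p \gg 1$ they possess uniform spectral gaps. Combined with the global connectivity of the $X_\Lambda$, this yields cosystolic expansion of the whole complex in dimensions $0 \leq i \leq d-2$.

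Third step, which is the main technical contribution of the paper: upgrade the above from $\mathbb{F}_2$ to arbitrary finite abelian coefficients $A$. After a primary decomposition of $A$ it suffices to consider $A = \mathbb{Z}/\ell^k$, and one proceeds by induction on $k$ using the short exact sequence
\[
0 \to \mathbb{Z}/\ell \to \mathbb{Z}/\ell^k \to \mathbb{Z}/\ell^{k-1} \to 0
\]
and a diagram chase in the induced long exact sequence in cohomology, tracking metric distortion at each connecting homomorphism. Correspondingly, coboundary expansion of the links over $\mathbb{Z}/\ell^k$ is deduced from the $\mathbb{F}_\ell$ case using that the links are $(d-2)$-connected Cohen--Macaulay complexes whose cohomology below the top dimension vanishes for every abelian coefficient group.

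The main obstacle is the third step: ensuring that neither the cosystolic nor the expansion constant degrades as $k \to \infty$, nor across the primes $\ell$ dividing $|A|$ (including $\ell = p$). This requires a careful reworking of the cone arguments underlying the local-to-global principle, with $\mathbb{F}_2$-linear algebra replaced by arithmetic over $\mathbb{Z}/\ell^k$-modules, together with the observation that the spherical buildings appearing as links have dimension at most $d-1$ and, for $p$ sufficiently large, enjoy spectral gaps uniform in $A$.
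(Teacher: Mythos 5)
Your first step --- using Lemma \ref{lem:shapiro} to convert the statement into a uniform cosystolic-expansion statement for the finite quotients $\mathcal{B}(\mathbf{G})/\Lambda$ with constant coefficients $A$ --- is exactly what the paper does. The divergence, and the problem, is in your third step. The paper never bootstraps from $\mathbb{F}_2$: it invokes the local-to-global theorem of Kaufman--Mass (Theorem \ref{TheoGeneralK-M}), which is already formulated over an arbitrary finite abelian group $A$, and whose local hypothesis (coboundary expansion of the links) is supplied by Kaufman--Mass's result that finite buildings are coboundary expanders over \emph{any} abelian group; the remaining hypothesis (skeleton expansion of all links, including the whole complex) is a purely combinatorial/spectral condition independent of $A$, verified via strong transitivity on links and Oppenheim's local spectral expansion. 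So the coefficient group $A$ only enters through a black box that already handles it.

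Your proposed d\'evissage along $0 \to \mathbb{Z}/\ell \to \mathbb{Z}/\ell^k \to \mathbb{Z}/\ell^{k-1} \to 0$ is not a routine diagram chase, and as written it has a genuine gap. Cosystolic expansion is a statement about the Hamming-type length $\|c\| = \sum_x \mu(c(x))\nu(x)$, which only records the support of a cochain; this length is not additive or even monotone in any useful way across the connecting maps of the long exact sequence (a lift of a cochain along $\mathbb{Z}/\ell^k \to \mathbb{Z}/\ell^{k-1}$ must be chosen, and controlling the length of that lift modulo coboundaries is precisely the content of expansion, not a consequence of exactness). "Tracking metric distortion at each connecting homomorphism" therefore names the entire difficulty rather than resolving it --- this is exactly why Kaufman--Mass had to redo the local-to-global argument over general $A$ rather than reduce to $\mathbb{F}_2$. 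Two further concrete issues: (a) your base case requires coboundary expansion of the links over $\mathbb{F}_\ell$ for every prime $\ell$ dividing $|A|$, whereas the Evra--Kaufman / Kaufman--Kazhdan--Lubotzky results you cite are over $\mathbb{F}_2$, so the base case is not covered by your step two; (b) the Cohen--Macaulay property of the links gives vanishing of their cohomology over any $A$, which makes cosystolic and coboundary expansion coincide, but it says nothing quantitative about the expansion constant over $A$, which is what the local-to-global machine consumes. If you want a self-contained route, replace step three by a direct appeal to \cite{kaufman2018cosystolic} as the paper does.
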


The goal of this section is to prove the following theorem, which implies the previous theorem by the discussion above and Lemma \ref{lem:shapiro} -- it is itself a generalization of \cite{MR3966743}*{Thm. 3.8} to arbitrary abelian groups.

\begin{theorem}
\label{TheoExpansionOverAnyGroup}
For $2\leq d \in \mathbb{N}$ there exist $\varepsilon = \varepsilon(d) > 0$ and $q_{0} = q_{0} (d)$ such that if ${\mathbb K}$ is a local non-archimedean field of fixed residue degree $q > q_{0}$, $\mathbf{G}$ a simple ${\mathbb K}$-group of ${\mathbb K}$-rank $d$. Let $A$ be a finite abelian group and let $\Lambda$ a torsion-free lattice of $\mathbf{G}(\mathbb{K})$ that acts with injectivity radius at least $3$ on the Bruhat-Tits building $\mathcal{B} (\mathbf{G})$.

Then, the $(d-1)$-skeleton $Y$ of the finite quotient $\mathcal{B}(\mathbf{G}) /\Lambda$ is a $(d-1)$-dimensional $(\varepsilon ,\varepsilon )$-cosystolic expander over $A$.
\end{theorem}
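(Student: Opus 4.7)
The plan is to adapt the local-to-global strategy of \cite{MR3966743} (which handles $A = \mathbb{F}_p$) to arbitrary finite abelian $A$, proceeding in three stages: a local-to-global principle reducing cosystolic expansion of $Y$ to coboundary expansion of all proper links of the building; verification of link expansion over $A$; and a check that all constants are uniform in $A$ once $q > q_0(d)$.

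For the first stage, I would formulate a local-to-global criterion of the form: if every link of a simplex in $\mathcal{B}(\mathbf{G})$ of codimension at least $2$ is an $\varepsilon_0$-coboundary expander over $A$, and $\Lambda$ acts with injectivity radius at least $3$, then the $(d-1)$-skeleton of $\mathcal{B}(\mathbf{G})/\Lambda$ is a $(\varepsilon,\varepsilon)$-cosystolic expander over $A$ for some $\varepsilon = \varepsilon(\varepsilon_0,d) > 0$. The proof of this criterion is a filling/cone argument: any cochain $c$ is decomposed as a sum of cochains supported near stars of vertices, the coboundary expansion of each link lets one correct $c$ by a coboundary, and iteratively reduces its support; the injectivity radius guarantees that links in the quotient coincide with links in the building, so the local expansion bounds really apply. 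Since the argument is purely combinatorial, it transfers verbatim from $\mathbb{F}_p$ to arbitrary $A$.

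For the second stage, the links of vertices in $\mathcal{B}(\mathbf{G})$ are spherical buildings of lower rank over the residue field $\mathbb{F}_q$. I would first establish coboundary expansion of these links over each prime field $\mathbb{F}_\ell$ using Garland's method: the weighted $1$-skeletons of links of links have spectral gap tending to $1$ as $q \to \infty$, and by the Garland/Evra--Kaufman machinery this yields coboundary expansion with a constant $\varepsilon_1 = \varepsilon_1(d)$ uniform in the prime $\ell$, provided $q > q_0(d)$. To transfer from $\mathbb{F}_\ell$-coefficients to arbitrary $A$, I would use the primary decomposition $A = \bigoplus_\ell A_\ell$ (which splits the cochain complex) and reduce to the case where $A$ is an $\ell$-group. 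For $\ell$-groups one exploits the short exact sequence $0 \to \ell A \to A \to A/\ell A \to 0$ and the induced long exact sequence in cohomology: given an $A$-cocycle $\alpha$ of small norm, its reduction mod $\ell$ has comparable norm and can be corrected by an $A/\ell A$-coboundary (using $\mathbb{F}_\ell$-expansion), after which the lifted cocycle may be assumed $\ell A$-valued, and one iterates.

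The main obstacle will be the last step: obtaining constants independent of the filtration length, i.e. of $|A|$. A naive induction on the filtration loses a multiplicative factor $\varepsilon_1^{-1}$ per layer, producing a final constant depending on $|A|$. To avoid this, I would choose cocycle representatives of minimum norm within each coboundary class before reducing: if $\alpha$ has minimum norm in $\alpha + B^i$, then its reduction mod $\ell$ cannot be much smaller than $\alpha$ itself (else we could lift a correcting $\mathbb{F}_\ell$-coboundary back to $A$ and strictly decrease $\|\alpha\|$, contradicting minimality). This gives a direct norm lower bound for $\alpha$ in terms of the $\mathbb{F}_\ell$-bound, losing only a bounded constant depending on $d$. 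Carrying this minimum-representative argument carefully through both the cosystolic and expansion inequalities is the technical heart of the extension from \cite{MR3966743}, and would give $\varepsilon(d)$ uniform in $A$ as claimed.
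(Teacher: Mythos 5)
Your overall architecture (local-to-global reduction to link expansion, then link expansion over $A$) matches the paper's in outline, but the paper gets both halves by citing machinery that is \emph{already} stated over arbitrary finite abelian groups: the Kaufman--Mass local-to-global theorem (their Theorems 3.3 and 3.4) and their Theorem 4.1 that finite (spherical) buildings are coboundary expanders over any abelian group, the latter proved by a cone/filling-radius argument whose constant is manifestly independent of the coefficient group. Measured against that, your proposal has three concrete problems. First, your local-to-global criterion omits a hypothesis that is actually needed and is the hardest part of the verification: besides coboundary expansion of the proper links, the Kaufman--Mass/Evra--Kaufman machinery requires every link \emph{including the whole complex} $X_{\emptyset}=X$ to be a skeleton expander. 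Skeleton expansion of the proper links follows from regularity and spectral gaps, but for $X$ itself one needs Oppenheim's descent-of-spectral-gaps theorem applied to the weighted upper Laplacians; your sketch never addresses this.

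Second, your route to link coboundary expansion over $\mathbb{F}_{\ell}$ via Garland does not work: Garland's method is a real, spectral argument and yields vanishing of real cohomology and skeleton/spectral expansion, not coboundary expansion with finite coefficients in dimensions $\geq 1$. The finite-coefficient coboundary expansion of spherical buildings comes from the Gromov/Lubotzky--Meshulam--Mozes cone argument, which works uniformly over every abelian group at once -- so the entire $\mathbb{F}_{\ell}\to A$ bootstrap is both unnecessary and, as proposed, broken. Third, the key claim in your uniformity argument is false as stated: if $\alpha$ is a minimal-norm cocycle taking values entirely in $\ell A$, its reduction mod $\ell$ is zero (hence ``much smaller'' than $\alpha$) while the correcting $\mathbb{F}_{\ell}$-coboundary is zero, so no contradiction with minimality arises. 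One is then forced to recurse through the filtration $A\supseteq \ell A\supseteq \ell^{2}A\supseteq\cdots$, and the multiplicative loss per layer gives a constant depending on $|A|$, which is exactly what the theorem forbids. The correct fix is not a cleverer filtration but an argument (like the cone argument) that treats all coefficient groups simultaneously.
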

Let's first spell out how Theorem \ref{CorPSpExpansion} can be derived.
\begin{proof}[Proof of Theorem \ref{CorPSpExpansion}]
Let $\Gamma$ be as above and $\Gamma \curvearrowright \Gamma/\Lambda$ be a finite transitive action. Then the cochain complex computing ${\rm H}^i(\Gamma,{\rm P}^{\Gamma/\Lambda}(A))$ is isometrically isomorphic to the the simplical cochain complex ${\rm C}^*(\mathcal{B}(\mathbf{G}) /\Lambda,A)$. Thus, if the $(d-1)$-skeleton of $\mathcal{B}(\mathbf{G}) /\Lambda$ satisfies cosystolic and expansion inequalities this translates into the corresponding inequalities for group cohomology with coefficients in ${\rm P}^{\Gamma/\Lambda}(A)$. Since the estimates of Theorem \ref{TheoExpansionOverAnyGroup} are independent of $\Lambda$, we obtain uniform estimates for all finite transitive actions.
\end{proof}

The result is essentially contained in the literature, even though it took us some effort to find the right results and understand how to combine them. We will prove Theorem \ref{TheoExpansionOverAnyGroup} as a consequence of the following result of Kaufman--Mass.

\begin{theorem}[Theorem 3.3 and 3.4, \cite{kaufman2018cosystolic}]
Let $d, Q \in \mathbb{N}$ and $X$ be a finite $d$-dimensional simplicial complex of $Q$-bounded degree, $A$ a finite abelian group and $\beta \in (0,1)$. There exists a $\mu = \mu (d,\beta ) >0$ such that if
\begin{enumerate}[$(i)$]
\item for every $\emptyset \neq \sigma \in X$ the link $X_{\sigma}$ is a $\beta$-coboundary expander over $A$ and
\item for every $\sigma \in X$ the link $X_{\sigma }$ is a $\mu$-skeleton expander,
\end{enumerate}
then the $(d-1)$-skeleton of $X$ is a $(\min \{ Q^{-2} ,\mu \} ,\mu)$-cosystolic expander over $A$. \label{TheoGeneralK-M}
\end{theorem}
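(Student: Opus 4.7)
My plan is to follow the local-to-global paradigm that originated with Kaufman--Kazhdan--Lubotzky and was refined by Kaufman--Mass, checking that every step remains valid when the coefficient group $\mathbb F_2$ is replaced by an arbitrary finite abelian group $A$. The target is: given $c \in \mathrm{C}^i(X,A)$ with $\|\delta c\|$ small, construct $b \in \mathrm{C}^{i-1}(X,A)$ so that $\|c - \delta b\|$ is bounded by $\|\delta c\|$ times a constant depending only on $d,\beta,\mu,Q$. This is the expansion inequality; the cosystolic inequality follows by specializing to a cocycle and iterating.

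The local step uses condition~(i). For each non-empty simplex $\sigma$, the localization $c_\sigma$ of $c$ to the link $X_\sigma$ satisfies $\delta_{X_\sigma} c_\sigma = (\delta c)_\sigma$ up to an error concentrated on the boundary of $\sigma$, so on average small $\|\delta c\|$ implies small $\|\delta_{X_\sigma} c_\sigma\|$. By $\beta$-coboundary expansion of $X_\sigma$ over $A$, there is a local correction $b_\sigma$ with $\delta b_\sigma$ close to $c_\sigma$ within the link, within $\beta^{-1}\|\delta_{X_\sigma}c_\sigma\|$. The patching step then assembles the $\{b_\sigma\}$ into a single global $b$: the disagreements between local corrections on overlapping links are themselves cochains on auxiliary graphs, and condition~(ii)---the $\mu$-skeleton expansion of every link---gives the expansion needed to kill these disagreements by a secondary correction of size at most $\mu^{-1}$ times the total disagreement. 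The degree bound $Q$ intervenes in converting between local link-norms and global norms on $X$ (with the weights of Definition \ref{ExNormOnCW2}), which is the origin of the $Q^{-2}$ in $\min\{Q^{-2},\mu\}$.

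The hardest step is the patching, because one must simultaneously track norm changes across several dimensions, invoke the expansion of an auxiliary graph at the correct weighted norm, and handle the ambiguity that each $b_\sigma$ is only defined modulo cocycles of the link $X_\sigma$, so disagreements must be measured modulo coboundaries in each link. Working with an arbitrary finite abelian group $A$ removes any shortcut based on self-duality of $\mathbb F_2$, so every estimate must be established purely cochain-theoretically using the weighted norms. Once the patching lemma is in place, the cosystolic inequality is obtained by iterating the expansion inequality on a small cocycle: each iteration strictly decreases the norm of the representative, and after finitely many steps the representative has norm below the threshold implicit in the coboundary expansion of the global complex, which forces the class to vanish and therefore imposes a uniform lower bound on $\vertiii{\alpha}$ for nonzero $\alpha$.
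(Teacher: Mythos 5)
This statement is not proved in the paper at all: it is imported verbatim from Kaufman--Mass \cite{kaufman2018cosystolic} (their Theorems 3.3 and 3.4), so the only thing to compare your proposal against is the actual argument in that reference. Measured against it, your write-up is a plan rather than a proof, and the plan's central mechanism is not the one that works. The Evra--Kaufman/Kaufman--Mass machinery does \emph{not} construct local primitives $b_\sigma$ on each link and then glue them; that naive patching scheme is precisely what fails in dimension $\geq 1$ and what the technology of \emph{locally minimal cochains} was invented to replace. The real argument replaces $c$ by a representative of $c+{\rm B}^i$ that is minimal under corrections supported near single vertices, and then runs a counting argument on ``fat'' simplices: coboundary expansion of links converts local minimality into the statement that a simplex whose localization is heavy forces many heavy simplices one dimension down, and $\mu$-skeleton expansion --- which is an \emph{isoperimetric} property of the underlying graphs of links (a small set of vertices spans few edges), not a cochain-expansion property --- bounds the measure of the fat sets. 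Your claim that skeleton expansion lets you ``kill the disagreements by a secondary correction of size at most $\mu^{-1}$ times the total disagreement'' attributes to it a coboundary-type inequality it does not provide, and no argument is given for why the disagreement cochains on the ``auxiliary graphs'' would be coboundaries there in the first place.

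The derivation of the cosystolic inequality at the end is also incorrect as stated. The expansion inequality of Definition \ref{hexdef} bounds $\|\delta_i(c)\|$ below by the distance from $c$ to ${\rm Z}^i$, so it is vacuous when $c$ is itself a cocycle; iterating it on a cocycle yields nothing, and the ``threshold implicit in the coboundary expansion of the global complex'' does not exist, since global coboundary expansion is not among the hypotheses (only the links are assumed to be coboundary expanders). The cosystolic inequality instead comes from applying the same fat-simplex dichotomy to a locally minimal cocycle $z$ with $\delta z=0$: either $\|z\|=0$ or $\|z\|\geq\varepsilon$. Finally, you assert but do not verify the one point that is genuinely specific to general $A$, namely that every step of the fat machinery (localization, local minimality, the weighted norm bookkeeping, and the link-level coboundary expansion) is insensitive to replacing $\mathbb F_2$ by an arbitrary finite abelian group; this is true, but it is the actual content of \cite{kaufman2018cosystolic} and needs to be checked rather than announced.
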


\begin{proof}[Proof of Theorem \ref{TheoExpansionOverAnyGroup}]
We need to check that the conditions of Theorem \ref{TheoGeneralK-M} are satisfied and the bounds are independent of $\Gamma$. First of all, note that $X$ is of $Q$-bounded degree independent of the choice of $\Gamma$.
By \cite{AB}*{Proposition 4.9} every link of a building is still a building and we already know that every proper link is finite.  By \cite{kaufman2018cosystolic}*{Theorem 4.1} every finite building is a coboundary expander over any abelian group. This shows that every proper link $X_{\sigma}$ is a $\beta_{d-|\sigma |}$-coboundary expander over any abelian group. Hence, Condition $(i)$ is satisfied.

The verification of Condition $(ii)$ is split into two parts. First we consider proper links and then the remaining degenerate case.

Every reductive algebraic group $\mathbf{G}$ admits a BN-pair and by \cite{AB}*{Theorem 6.56} $\mathbf{G}(\mathbb K)$ acts strongly transitively on its Bruhat-Tits building. Now, \cite{MR3536553}*{Lemma 5.17} states that a suitable stabilizer also acts strongly transitively on all links of said building. 
Hence \cite{MR3536553}*{Lemma 5.14} states that all proper links are regular complexes and therefore they are $\lambda (X_{\sigma})$-skeleton expanders by \cite{MR3536553}*{Corollary 4.7}. Since there are only finitely many isomorphisms classes of links in $\mathcal{B}(\mathbf{G})$, this allows us to to find a bound on the skeleton expansion of proper links in $X$ independently of the specific $\Gamma$ we chose.

It remains to show skeleton expansion for $X_{\emptyset } =X$ and here the argument gets slightly more complicated. Again, the proof relies on a local-global method. As we have mentioned before every proper link in $X$ appears as a proper link in $\mathcal{B}(\mathbf{G})$. Furthermore we know that every link of $\mathcal{B}(\mathbf{G})$ is again a building. This means two things for us:

\begin{enumerate}[$(a)$]
\item Every link of $X$ is connected.
\item By \cite{kaufman2018cosystolic}*{Theorem 4.1} spherical buildings are coboundary expanders. In particular, this shows that the $1$-dimensional links of $X$ are expander graphs with a Cheeger constant which is bounded by a constant that is independent of the specific link.
\end{enumerate}
Therefore $X$ is a \emph{$\lambda$-local spectral expander} in the notation of \cite{MR3755725} and the assumptions of \cite{MR3755725}*{Corollary 6.1} are satisfied. In particular, then the non-trivial spectrum of the upper Laplacian $\Delta^{+} \coloneqq \Delta^{+}_{0}$ with respect to the inner product $\langle f,g\rangle \coloneqq \sum_{\sigma \in \Sigma (k)} \frac{m(\sigma )}{k!} \cdot f(\sigma )\cdot g(\sigma )$ on ${\rm C}^{k} (X,\mathbb{R} )$ and with $m$ from Remark \ref{ExNormOnCW3} is contained in $[a_{\alpha} ,\infty )$ with $a_{\alpha} >0$ and $\lim_{\alpha \rightarrow k-1} a_{\alpha} =1$. This implies that $X$ is a $\mu$-skeleton expander as required.
\end{proof}

\subsection{An application to weak containment}
Let $\kappa \colon {\rm P} \to {\rm Q}$ be a measure-preserving, $\Gamma$-equivariant homomorphism of Boolean algebras. Let $A$ be a finite abelian group. Note that the induced map $\kappa_* \colon {\rm H}^i(B\Gamma,{\rm P}(A)) \to {\rm H}^i(B\Gamma,{\rm Q}(A))$ is contractive. This simple observation allows us to find a cohomological obstruction to weak containment of a measured $\Gamma$-Boolean algebra provided a suitable cosystolic inequality holds. This is explained in the next lemma and its corollary.

\begin{lemma} \label{lem:limit}
Let  $\Gamma$ be  of finite type, $\mathcal P$ be a family of measured $\Gamma$-Boolean algebras and $A$ be an abelian group. Let $({\rm P}_j,\mu_j)_{j \in I}$ be an indexed set of elements of $\mathcal P$ and ${\mathcal U} \in \beta I$ be a non-principal ultrafilter. Then, the natural map
$${\rm H}_{\rm red}^i(\Gamma,{\rm P}_j(A))_{\mathcal U} \to {\rm H}_{\rm red}^i(\Gamma,{\rm P}_{\mathcal U}(A))$$
is an isometric injection and it is surjective if $\Gamma$ satisfies expansion in dimension $i$ with respect to the family $\mathcal P$ and $A.$
In particular, we have
$$\vertiii{\theta^{{\rm P}_{\mathcal U}}_*(\alpha)}_{\mu_{\mathcal U}} = \lim_{j \to {\mathcal U}} \vertiii{\theta^{{\rm P}_j}_*(\alpha)}_{\mu_j}$$
for all $\alpha \in {\rm H}^i(\Gamma,A)$.
\end{lemma}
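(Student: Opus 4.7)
The plan is to work at the level of cochains, exploiting that $\Gamma$ is of finite type so that, after a choice of section, ${\rm C}^i(B\Gamma,{\rm P}(A))$ is a finite product of copies of ${\rm P}(A)$ carrying the $\ell^1$-type length function built from $\mu$ and $\nu_i$. Since metric ultraproducts of metric abelian groups commute with finite products, the first step is the isometric identification
$${\rm C}^i(B\Gamma,{\rm P}_j(A))_{\mathcal U} \;=\; {\rm C}^i(B\Gamma,{\rm P}_{\mathcal U}(A)),$$
together with the fact that the Lipschitz differentials $\delta_i$ are compatible with the ultraproduct. In particular, any family of cocycles $c_j \in {\rm Z}^i(B\Gamma,{\rm P}_j(A))$ yields a genuine cocycle $[c_j] \in {\rm Z}^i(B\Gamma,{\rm P}_{\mathcal U}(A))$, and I would define the natural map by sending $[\alpha_j]_{\mathcal U}$ to the class of $[c_j]$, for any choice of cocycle representatives $c_j$ of $\alpha_j$.

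Well-definedness and the isometric injection property then both follow from the identity
$$\vertiii{[c_j]}_{\mu_{\mathcal U}} \;=\; \lim_{j \to {\mathcal U}} \vertiii{\alpha_j}_{\mu_j}.$$
For the upper bound, given $\varepsilon > 0$, on the ${\mathcal U}$-large set $\{j : \vertiii{\alpha_j}_{\mu_j} < \lim_{\mathcal U}\vertiii{\alpha_j}_{\mu_j} + \varepsilon\}$ I would pick $b_j \in {\rm C}^{i-1}(B\Gamma,{\rm P}_j(A))$ with $\|c_j - \delta b_j\|_i \leq \vertiii{\alpha_j}_{\mu_j} + \varepsilon$, which gives $\|[c_j] - \delta [b_j]\|_i \leq \lim_{\mathcal U}\vertiii{\alpha_j}_{\mu_j} + \varepsilon$, and then let $\varepsilon \to 0$. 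For the lower bound I use that every $b \in {\rm C}^{i-1}(B\Gamma,{\rm P}_{\mathcal U}(A))$ is already of the form $[b_j]$ by the finite-product identification, so $\|[c_j] - \delta b\|_i = \lim_{\mathcal U}\|c_j - \delta b_j\|_i \geq \lim_{\mathcal U}\vertiii{\alpha_j}_{\mu_j}$, since each $c_j - \delta b_j$ represents $\alpha_j$.

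For surjectivity under the expansion hypothesis, any cocycle $z = [z_j] \in {\rm Z}^i(B\Gamma,{\rm P}_{\mathcal U}(A))$ satisfies $\lim_{\mathcal U} \|\delta z_j\|_{i+1} = 0$, and applying the expansion inequality inside each ${\rm P}_j \in \mathcal P$ produces genuine cocycles $z_j' \in {\rm Z}^i(B\Gamma,{\rm P}_j(A))$ with $\|z_j - z_j'\|_i \leq \varepsilon^{-1}\|\delta z_j\|_{i+1}$; then $[z_j'] = [z_j] = z$ in the ultraproduct, so $([z_j'])_{\mathcal U}$ is the required preimage. The final ``in particular'' formula is then immediate from the isometric injection together with the naturality $\theta^{{\rm P}_{\mathcal U}} \circ c = [\theta^{{\rm P}_j} \circ c]_{\mathcal U}$ applied to any cocycle $c$ representing $\alpha$. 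I expect the main technical obstacle to be the uniform-approximation step in the upper-bound direction: the reduced norm is defined by an infimum that is not attained in general, so the correcting coboundaries $b_j$ must be chosen simultaneously for $\mathcal U$-almost all $j$ in a way that respects the infimum along the ultrafilter.
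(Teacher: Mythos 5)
Your argument is correct and follows essentially the same route as the paper's proof: both directions of the isometry are obtained by passing near-optimal coboundary corrections through the identification of the ultraproduct of cochain groups with the cochain group of the ultraproduct (valid since $B\Gamma(i)$ is finite), and surjectivity is obtained by applying the expansion inequality to lifts $z_j$ of a cocycle $z=[z_j]$ to produce genuine cocycles $z_j'$ with $\lim_{\mathcal U}\|z_j-z_j'\|=0$. The technical obstacle you flag at the end is handled exactly as you propose (choose $b_j$ within $\varepsilon$ of the infimum for each $j$ and pass to the ultralimit), which is also what the paper does.
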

\begin{proof} Let $\alpha_j \in {\rm H}^i(\Gamma,{\rm P}_j(A))$
and $\delta>0$ for $j \in I$. Denote the image of in ${\rm H}^i(\Gamma,{\rm P}_{\mathcal U}(A))$ by $\alpha_{\mathcal U}$.
The inequality $$\vertiii{\alpha_{\mathcal U}}_{\mu_{\mathcal U}} \leq \lim_{j \to {\mathcal U}} \vertiii{\alpha_j}_{\mu_j}$$ is obvious. Indeed, let $\delta>0$ be arbitrary and let $\beta_j \colon B\Gamma(i) \to {\rm P}_j(A)$ be cocycle representatives of $\alpha_j$ such that $\vertiii{\alpha_j }_{\mu_{j}} \geq \|\beta\|_{j} - \delta.$ Then, $\beta = [\beta_j]_j$ is by definition a cocycle representative of $\alpha_{\mathcal U}$ and
$$\vertiii{\alpha_{\mathcal U} }_{\mu_{\mathcal U}} \leq \|\beta\|_{\mu_{\mathcal U}} = \lim_{j \to \mathcal U} \|\beta_j\|_{\mu_j} \leq \lim_{j \to \mathcal U} \vertiii{\alpha_{j} }_{\mu_{j}} + \delta.$$ Since $\delta>0$ was arbitrary, this proves the first inequality.

Let now $\beta' \colon B\Gamma(i) \to {\rm P}_{\mathcal U}(A)$ be a cocycle representative of $\alpha_{\mathcal U}$, such that
$\vertiii{\alpha_{\mathcal U} }_{\mu_{\mathcal U}} \geq \|\beta'\|_{\mu_{\mathcal U}} - \delta.$
Note that $\delta_i(\beta')=0$ and $\beta' - \beta= \delta_{i-1}(\zeta)$ for some $\zeta = [\zeta_j]_j \colon B\Gamma(i-1) \to {\rm P}_{\mathcal U}(A).$ We set $\beta'_j := \beta_j - \delta_{i-1}(\zeta_j)$ and note that $[\beta'_j]_j = \beta'$. The elements $\beta'_j$ are clearly cocycles that represent $\alpha_j$. Hence
$$\lim_{j \to \mathcal U} \vertiii{\alpha_{j} }_{\mu_{j}} \leq \lim_{j \to \mathcal U} \|\beta_j' \|_{\mu_{j}} = \|\beta'_{\mathcal U} \|_{\mu_{\mathcal U}} \leq \vertiii{\alpha_{\mathcal U} } + \delta.$$ This proves the other inequality and shows that the natural map is indeed an isometric embedding.

It remains to show that the map is surjective in presence of expansion. Let $\alpha_{\mathcal U} \in {\rm H}^i(\Gamma,{\rm P}_{\mathcal U}(A))$ be represented by $\beta \colon B\Gamma(i) \to {\rm P}_{\mathcal U}(A).$
Further, choose functions $\beta_j \colon B\Gamma(i) \to P_j(A)$, such that $[\beta_j]_j=\beta$. From \eqref{EqExpanderIneq} in Definition \ref{hexdef}, we conclude that there exists $z_j \in {\rm Z}^i(B\Gamma,P_j(A))$ with
$$\lim_{j \to {\mathcal U}}\|\delta_{i}(\beta_j)\| \geq \varepsilon \lim_{j \to {\mathcal U}} \|\beta_j - z_j\|.$$

We obtain
$$0 = \|\delta_i(\beta)\|_{\mu_{\mathcal U}} = \lim_{j \to {\mathcal U}} \|\delta_i(\beta_j)\|_{\mu_j} \geq \varepsilon\lim_{j \to {\mathcal U}} \|\beta_j-z_j\|_{\mu_j}.$$
Since the image of $[z_j]_j$ in ${\rm H}^i(\Gamma,{\rm P}_{\mathcal U}(A))$ is exactly $\alpha_{\mathcal U}$, we conclude that surjectivity of the map as claimed.
\end{proof}


\begin{corollary} 
\label{cor:weakcont}
Let $\Gamma$ be a group of finite type, let $({\rm P},\mu)$ be a measured $\Gamma$-Boolean algebra and let $\mathcal P$ be a family of measured $\Gamma$-Boolean algebra. If 
$$\vertiii{\theta^{\rm P}_*(\alpha)}_{\mu} < \inf \{ \vertiii{\theta^{{\rm Q}}_*(\alpha)}_{\nu} \mid ({\rm Q},\nu) \in \mathcal P \}$$
for some $\alpha \in {\rm H}^i(\Gamma,A),$
then $({\rm P},\mu)$ is not weakly contained in the family $\mathcal P$.
\end{corollary}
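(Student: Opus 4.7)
The plan is a straightforward contradiction argument, combining the naturality of $\theta_*$ with the isometric-embedding part of Lemma \ref{lem:limit}.

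Suppose for contradiction that $({\rm P},\mu)$ is weakly contained in $\mathcal{P}$. Unpacking the definition, this means that there is an indexed family $({\rm Q}_j,\nu_j)_{j\in I}$ of elements of $\mathcal{P}$, a non-principal ultrafilter $\mathcal{U} \in \beta I$, and a measure-preserving, $\Gamma$-equivariant homomorphism of Boolean algebras $\kappa \colon {\rm P} \to {\rm Q}_{\mathcal{U}}$. The first step is to observe that $\kappa$ is natural with respect to the inclusions $\theta$: since $\kappa$ is a homomorphism of Boolean algebras (hence maps $1 \in {\rm P}$ to $1 \in {\rm Q}_{\mathcal{U}}$), the induced map $\kappa_*$ on ${\rm H}^i(B\Gamma,-)$ sends $\theta^{\rm P}_*(\alpha)$ to $\theta^{{\rm Q}_{\mathcal{U}}}_*(\alpha)$ for every $\alpha \in {\rm H}^i(B\Gamma,A)$.

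Next I would invoke the contractivity of $\kappa_*$ (noted just before the lemma statement), which gives
\[
\vertiii{\theta^{{\rm Q}_{\mathcal{U}}}_*(\alpha)}_{\nu_{\mathcal{U}}} \;=\; \vertiii{\kappa_*(\theta^{\rm P}_*(\alpha))}_{\nu_{\mathcal{U}}} \;\leq\; \vertiii{\theta^{\rm P}_*(\alpha)}_{\mu}.
\]
On the other hand, Lemma \ref{lem:limit} (specifically the ``in particular'' clause, which only uses the isometric embedding half and so requires no expansion hypothesis) identifies the left-hand side with the ultralimit
\[
\vertiii{\theta^{{\rm Q}_{\mathcal{U}}}_*(\alpha)}_{\nu_{\mathcal{U}}} \;=\; \lim_{j \to \mathcal{U}} \vertiii{\theta^{{\rm Q}_j}_*(\alpha)}_{\nu_j} \;\geq\; \inf_{({\rm Q},\nu)\in\mathcal{P}} \vertiii{\theta^{{\rm Q}}_*(\alpha)}_{\nu},
\]
since every $({\rm Q}_j,\nu_j)$ lies in $\mathcal{P}$ and the infimum is a lower bound for the ultralimit of terms from $\mathcal{P}$.

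Chaining the two displayed inequalities yields
\[
\inf_{({\rm Q},\nu)\in\mathcal{P}} \vertiii{\theta^{{\rm Q}}_*(\alpha)}_{\nu} \;\leq\; \vertiii{\theta^{\rm P}_*(\alpha)}_{\mu},
\]
directly contradicting the standing hypothesis, so the assumption that $({\rm P},\mu)$ is weakly contained in $\mathcal{P}$ must fail. No step is genuinely hard; the only conceptual point one has to get right is that the naturality square $\theta^{{\rm Q}_{\mathcal{U}}} = \kappa \circ \theta^{\rm P}$ (on the level of coefficients, hence of induced maps on cohomology) behaves correctly, after which Lemma \ref{lem:limit} carries the rest of the argument.
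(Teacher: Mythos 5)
Your argument is correct and is essentially the paper's own proof: both use contractivity of the map induced by the containment $\kappa\colon {\rm P}\to {\rm Q}_{\mathcal U}$ together with the ``in particular'' clause of Lemma \ref{lem:limit} to bound $\vertiii{\theta^{\rm P}_*(\alpha)}_{\mu}$ from below by the infimum over $\mathcal P$, contradicting the hypothesis. Your remark that only the isometric-embedding half of Lemma \ref{lem:limit} (and hence no expansion assumption) is needed is accurate and, if anything, slightly cleaner than the paper's phrasing.
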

\begin{proof} This a consequence of Lemma \ref{lem:limit} which is applicable because of Condition $(i)$. Indeed, if $({\rm P},\mu)$ is contained in some ultra-product $({\rm P}_{\mathcal U},\mu_{\mathcal U})$ for some ultrafilter ${\mathcal U} \in \beta I$ and $({\rm P}_j,\mu_j) \in \mathcal P$ for $j \in I$, then the natural map
$${\rm H}^i(\Gamma,{\rm P}(A)) \to {\rm H}^i(\Gamma,{\rm P}_{\mathcal U}(A))$$ is contractive and hence
$$\vertiii{\theta^{\rm P}_*(\alpha)}_{\mu} \geq \vertiii{\theta^{{\rm P}_{\mathcal U}}_*(\alpha)}_{\mu_{\mathcal U}} = \lim_{j \to {\mathcal U}} \vertiii{\theta^{{\rm P}_j}_*(\alpha)}_{\mu_j} \geq \inf \{ \vertiii{\theta^{{\rm Q}}_*(\alpha)}_{\nu} \mid ({\rm Q},\nu) \in \mathcal P \}.$$
However, this contradicts our assumption. 
\end{proof}

\subsection{An application to residually finite groups}

Let $\Gamma$ be a residually finite group. Let $A$ be a finite abelian group and let $\alpha \in {\rm Z}^2(\Gamma,A)$ be a 2-cocycle that classifies a central extension
$$1 \to A \to \tilde \Gamma \to \Gamma \to 1.$$

The following lemma provides a cohomological consequence of the situation that $\tilde\Gamma$ is not residually finite. 

\begin{lemma} \label{lem:nonzero}
Let $\Gamma$ be a residually finite group, let $\Lambda \leq \Gamma$ be a subgroup of finite index and let $A$ be a finite abelian group. If $\alpha \in {\rm H}^2(\Gamma,A)$ classifies a central extension $\tilde \Gamma$ which is not residually finite, then 
$$0 \neq \theta^{\Gamma/\Lambda}_*(\alpha) \in {\rm H}^i(\Gamma,{\rm P}^{\Gamma/\Lambda}(A)).$$
\end{lemma}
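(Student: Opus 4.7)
The key reduction is Shapiro's lemma (Lemma \ref{lem:shapiro}): the plan is to observe that $\theta^{\Gamma/\Lambda}_*(\alpha) = \kappa(\alpha|_\Lambda)$ and that $\kappa$ is an isomorphism, so nonvanishing of $\theta^{\Gamma/\Lambda}_*(\alpha)$ is equivalent to nonvanishing of the restriction $\alpha|_\Lambda \in {\rm H}^2(\Lambda, A)$. So I would aim to prove the contrapositive: if $\alpha|_\Lambda = 0$, then $\tilde\Gamma$ is residually finite.

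Now I would translate the vanishing $\alpha|_\Lambda = 0$ into the classical statement that the pullback central extension
$$1 \to A \to \tilde\Lambda \to \Lambda \to 1$$
splits, where $\tilde\Lambda$ denotes the preimage of $\Lambda$ in $\tilde\Gamma$. Because $A$ is central in $\tilde\Gamma$, the $\Lambda$-action on $A$ in this extension is trivial, so a splitting yields $\tilde\Lambda \cong \Lambda \times A$. Since $\Gamma$ is residually finite, so is its subgroup $\Lambda$, and since $A$ is finite, the direct product $\Lambda \times A$ is residually finite as well. Finally, as $[\Gamma:\Lambda] < \infty$, we have $[\tilde\Gamma:\tilde\Lambda] < \infty$.

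The last step is to upgrade residual finiteness of the finite-index subgroup $\tilde\Lambda$ to residual finiteness of $\tilde\Gamma$. This is a standard normal core argument: given $g \in \tilde\Gamma \setminus \{1\}$, first assume $g \in \tilde\Lambda$ (in the other case, the normal core of $\tilde\Lambda$ in $\tilde\Gamma$ already separates $g$ from $1$); by residual finiteness of $\tilde\Lambda$ choose a finite-index normal subgroup $N \triangleleft \tilde\Lambda$ with $g \notin N$, then pass to the normal core of $N$ in $\tilde\Gamma$, which is a finite-index normal subgroup of $\tilde\Gamma$ still not containing $g$. This contradicts the hypothesis that $\tilde\Gamma$ is not residually finite, completing the proof.

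There is no serious obstacle here; the argument is essentially a bookkeeping chain of well-known facts. The only conceptually nontrivial input is the isometric Shapiro isomorphism of Lemma \ref{lem:shapiro} which makes the translation between $\theta^{\Gamma/\Lambda}_*$ and the restriction map available, and I would take care to invoke it in the direction $\kappa([c]|_\Lambda) = \theta^{\Gamma/\Lambda}_*([c])$ with $[c] = \alpha$ so that the claim is a direct consequence of $\alpha|_\Lambda \neq 0$.
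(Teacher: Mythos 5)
Your argument is exactly the paper's proof: apply the Shapiro isomorphism to reduce to the vanishing of $\alpha|_\Lambda$, deduce that the preimage of $\Lambda$ in $\tilde\Gamma$ is $\Lambda \times A$, hence residually finite, and conclude that $\tilde\Gamma$ — having a residually finite subgroup of finite index — is residually finite, contradicting the hypothesis. The only difference is that you spell out the standard normal-core step that the paper leaves implicit; the proposal is correct.
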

\begin{proof}
By Lemma \ref{lem:shapiro}, we have $\kappa(\alpha|_{\Lambda}) = \theta^{\Gamma/\Lambda}_*(\alpha)$, so that $\alpha|_{\Lambda}$ must vanish if $\theta^{\Gamma/\Lambda}_*(\alpha)=0.$ But then, the inverse image of $\Lambda$ in $\tilde \Gamma$ is isomorphic to $\Lambda \times A$. In particular, $\tilde \Gamma$ contains a residually finite subgroup of finite index. This implies that $\tilde \Gamma$ is also residually finite in contradiction to our assumption.
\end{proof}

The preceding lemma has an interesting consequence in presence of a suitable cosystolic inequality in dimension $2$. We denote the profinite completion of a residually finite group $\Gamma$ by $\hat \Gamma.$ Recall that it carries a natural Haar measure.

\begin{theorem} \label{thm:neqzero}
Let $A$ be a finite abelian group.
Let $\Gamma$ be a residually finite group which satisfies a co-systolic inequality in dimension $2$ for all finite transitive actions with respect to $A$. If $\alpha \in {\rm H}^2(\Gamma,A)$ classifies a central extension $\tilde \Gamma$ which is not residually finite, then
$$0 \neq \theta^{\rm P}_*(\alpha) \in {\rm H}^2(\Gamma,{\rm P}(A))$$
whenever the measured $\Gamma$-Boolean algebra ${\rm P}$ is weakly contained in the family of finite actions. In particular, this holds for ${\rm P}={\rm M}(\hat \Gamma).$
\end{theorem}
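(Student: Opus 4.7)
The plan is to deduce the statement from Corollary~\ref{cor:weakcont} applied to the family $\mathcal P_{\rm fin}$ of all finite actions, once the hypothesized cosystolic lower bound (stated only for finite \emph{transitive} actions) has been upgraded to a uniform lower bound over the larger family. The first ingredient is immediate: let $\varepsilon>0$ be the constant from the cosystolic hypothesis; for every finite-index subgroup $\Lambda \leq \Gamma$, Lemma~\ref{lem:nonzero} forces $\theta^{\Gamma/\Lambda}_*(\alpha) \neq 0$ in ${\rm H}^2(\Gamma,{\rm P}^{\Gamma/\Lambda}(A))$, so the cosystolic inequality applied to the transitive action $\Gamma \curvearrowright \Gamma/\Lambda$ yields $\vertiii{\theta^{\Gamma/\Lambda}_*(\alpha)}_{\mu_{\Gamma/\Lambda}} \geq \varepsilon$.

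Next, I upgrade this lower bound to an arbitrary finite action. Let $X$ be a finite $\Gamma$-set with orbit decomposition $X = X_1 \sqcup \cdots \sqcup X_k$ and $\Gamma$-invariant probability weights $c_l = \mu_X(X_l)$, so that each $X_l$ is transitive. Then ${\rm P}^X(A) \cong \bigoplus_{l=1}^k {\rm P}^{X_l}(A)$ as $\mathbb Z\Gamma$-modules, and under this identification $\mu_X = \sum_l c_l \mu_{X_l}$, the cochain length function on ${\rm C}^i(B\Gamma,{\rm P}^X(A))$ is the weighted sum $\sum_l c_l \|\cdot\|_{\mu_{X_l}}$ of the component length functions, the cohomology splits as $\bigoplus_l {\rm H}^i(\Gamma,{\rm P}^{X_l}(A))$, the class $\theta^X_*(\alpha)$ corresponds to $(\theta^{X_l}_*(\alpha))_l$, and the infimum over cocycle representatives defining $\vertiii{\cdot}$ decouples orbit by orbit. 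Hence
$$\vertiii{\theta^X_*(\alpha)}_{\mu_X} \;=\; \sum_{l=1}^k c_l \,\vertiii{\theta^{X_l}_*(\alpha)}_{\mu_{X_l}} \;\geq\; \varepsilon.$$

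Feeding this uniform lower bound into Corollary~\ref{cor:weakcont} with $\mathcal P = \mathcal P_{\rm fin}$, any measured $\Gamma$-Boolean algebra $({\rm P},\mu)$ weakly contained in finite actions satisfies $\vertiii{\theta^{\rm P}_*(\alpha)}_\mu \geq \varepsilon$, which in particular gives $\theta^{\rm P}_*(\alpha) \neq 0$. For the final assertion, ${\rm M}(\hat\Gamma)$ is the direct limit of the finite measure algebras ${\rm M}(\Gamma/N)$ of the transitive quotient actions as $N$ ranges over finite-index normal subgroups of $\Gamma$, with Haar measure corresponding to normalized counting; such a direct limit embeds isometrically and $\Gamma$-equivariantly into a metric ultraproduct of its terms, so ${\rm M}(\hat\Gamma)$ is weakly contained in finite actions and the preceding conclusion applies.

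The principal obstacle is the second step, namely verifying that all of the relevant structures — coefficients, measure, cochain length, coinduction map, and cohomology — respect the orbit decomposition compatibly enough that $\vertiii{\theta^X_*(\alpha)}_{\mu_X}$ is genuinely the convex combination $\sum_l c_l \vertiii{\theta^{X_l}_*(\alpha)}_{\mu_{X_l}}$ and not merely dominated by it; only with this equality does the uniform transitive bound $\varepsilon$ persist to the family $\mathcal P_{\rm fin}$.
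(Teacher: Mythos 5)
Your proposal is correct and follows essentially the same route as the paper's own proof: Lemma~\ref{lem:nonzero} plus the cosystolic inequality give the uniform bound $\varepsilon$ on transitive actions, the orbit decomposition extends it to all finite actions, and Corollary~\ref{cor:weakcont} concludes. You merely spell out the convex-combination computation behind the step ``any finite action is a disjoint union of transitive actions'' and the weak containment of ${\rm M}(\hat\Gamma)$ in finite actions, both of which the paper leaves implicit.
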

\begin{proof} This is a consequence of Lemma \ref{lem:nonzero} and Corollary \ref{cor:weakcont}. Indeed, by the cosystolic inequality, there is a uniform lower bound for the length of the cohomology class $\theta^{\rm P}_*(\alpha) \in {\rm H}^2(\Gamma,{\rm P}(A))$ for all finite transitive actions. Since any finite action is a disjoint union of transitive actions, this implies the same lower bound on the length of $\theta^{\rm P}_*(\alpha)$ for \emph{all} finite actions. Now, Corollary \ref{cor:weakcont} yields a lower bound on the length of $\theta^{\rm P}_*(\alpha)$.
\end{proof}

\section{Approximation and stability} \label{sec:3}

\subsection{Sofic approximations}

For background on sofic groups and sofic approximations we refer to the excellent survey by Pestov \cite{pestov}.
Let $\Gamma$ be a finitely presented group. Fix a surjection $\pi \colon {\rm F} \to \Gamma$, where ${\rm F}$ is a finitely generated free group and let $R \subseteq \ker(\pi)$ be a finite set that generates $\ker(\pi)$ as a normal subgroup. We set 
$$\ell_n(\sigma) = \frac1n |\{i \mid \sigma(i) \neq i\}|, \quad \forall \sigma \in {\rm Sym}(n)$$
and define the normalized Hamming metric as $d_n(\sigma,\tau)=\ell_n(\sigma\tau^{-1})$ for all $\sigma,\tau \in {\rm Sym}(n).$

\begin{definition}
A sofic representation of the group $\Gamma$ is a sequence of homomorphisms $\sigma_n \colon {\rm F} \to {\rm Sym}(k_n)$ such that
$\ell_{k_n}(r) \to 0$ for all $r \in R$ and $\ell_{k_n}(g) \to 1$ for all $g \notin \ker(\pi).$
\end{definition}

Note that each $\sigma_n$ gives rise to a measured ${\rm F}$-Boolean algebra ${\rm P}_n$. Moreover, after fixing an ultrafilter ${\mathcal U} \in \beta \mathbb N$, each sofic approximation gives rise to a natural \emph{limit action} given by the metric ultraproduct of measured ${\rm F}$-Boolean algebras.
Note that the limit action of a sofic approximation of $\Gamma$ is naturally a measured $\Gamma$-Boolean algebra.

\begin{lemma}
Let $\tilde \Gamma$ be a group with a finite normal subgroup $A$ and let $\Gamma \coloneqq \tilde \Gamma/A$ be the quotient. Every sofic approximation $(\sigma_n \colon {\rm F} \to {\rm Sym}(k_n))_n$ of $\tilde\Gamma$ is equivalent to a sofic approximation of the form $(\tilde \sigma_n \colon {\rm F} \to {\rm Sym}(m_n \times A))_n$, where $A$ acts regularly on the right factor. In particular, it induces a sofic approximation $(\sigma'_n \colon {\rm F} \to {\rm Sym}(m_n))_n$ of the group $\Gamma$.
\end{lemma}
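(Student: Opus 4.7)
The plan is to exploit that the finite normal subgroup $A \trianglelefteq \tilde\Gamma$ forces lifts in ${\rm F}$ to behave under $\sigma_n$ as an almost-$A$-action whose non-trivial elements are almost fixed-point-free; rectifying this to a genuine free $A$-action at Hamming cost $o(k_n)$ produces the product structure $m_n \times A$. Fix lifts $\tilde a \in {\rm F}$ of each $a \in A$ with $\tilde e = e$. Since $\tilde a \tilde b (\widetilde{ab})^{-1} \in \ker(\pi)$ and $\tilde a \notin \ker(\pi)$ for $a \neq e$, the maps $a \mapsto \sigma_n(\tilde a)$ form an almost-homomorphism $A \to {\rm Sym}(k_n)$ whose non-trivial elements satisfy $\ell_{k_n}(\sigma_n(\tilde a)) \to 1$. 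Because $A$ is finite, a cut-and-paste argument (finite groups are permutation-stable) replaces $\sigma_n|_A$ by an honest homomorphism $\rho_n \colon A \to {\rm Sym}(k_n)$ with $d_{k_n}(\rho_n(a), \sigma_n(\tilde a)) \to 0$. The set of points with non-trivial $\rho_n$-stabilizer is contained in $\bigcup_{a \neq e} {\rm Fix}(\rho_n(a))$, of size at most $\sum_{a \neq e} k_n(1 - \ell_{k_n}(\rho_n(a))) = o(k_n)$. Removing these points and at most $|A|-1$ further points yields an $A$-invariant subset $W_n$ of size $m_n |A|$ on which $A$ acts freely; a transversal identifies $W_n \cong m_n \times A$ so that $\rho_n(a)$ becomes right multiplication on the second factor.

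Next I transport each $\sigma_n(g)$ across the bijection $W_n \cong m_n \times A$, extending arbitrarily on the $o(k_n)$-sized complement, to obtain $\tilde\sigma_n(g) \in {\rm Sym}(m_n \times A)$; for $a \in A$ I declare $\tilde\sigma_n(\tilde a)$ to be exactly the regular action. All modifications occur on $o(k_n)$-sets, so $\tilde\sigma_n$ remains a sofic approximation of $\tilde\Gamma$ with the same ultralimit action as $\sigma_n$. For the induced approximation of $\Gamma$, normality of $A$ provides for each $g \in {\rm F}$ an automorphism $\phi_g$ of $A$ with $g \tilde a g^{-1} (\widetilde{\phi_g(a)})^{-1} \in \ker(\pi)$, so $\tilde\sigma_n(g)$ asymptotically intertwines right multiplication by $a$ with that by $\phi_g(a)$ and hence asymptotically permutes the $A$-orbits $\{m\} \times A$. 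I define $\sigma'_n(g) \in {\rm Sym}(m_n)$ to be this induced permutation of orbits, extended arbitrarily on the exceptional $o(m_n)$-set. For $g$ with $\pi(g) \in A$, $\tilde\sigma_n(g)$ is close to some $\tilde\sigma_n(\tilde a)$ that fixes every orbit, so $\sigma'_n(g)$ is close to the identity. For $g$ with $\pi(g) \notin A$, a set of $\sigma'_n(g)$-fixed points of positive density would yield positive-density many $\tilde\sigma_n(g)$-preserved orbits, on each of which $\tilde\sigma_n(g)$ acts as right multiplication by some $a_0 \in A$; pigeonholing over the finite set $A$ then produces positive-density many fixed points of $\tilde\sigma_n(g \tilde{a_0}^{-1})$, contradicting $g \tilde{a_0}^{-1} \notin \ker(\pi)$.

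The main obstacle is the careful Hamming bookkeeping through each modification step --- the rectification of $\sigma_n|_A$ to an honest $A$-action, the shrinking to a free $A$-invariant subset, the transport of the remaining $\sigma_n(g)$ across the change of underlying set, and the readout of an induced permutation of orbits --- so that each costs only $o(k_n)$ per fixed generator. Once this is arranged, both the equivalence of $\tilde\sigma_n$ with $\sigma_n$ (as giving the same ultralimit action of $\tilde\Gamma$) and the soficity of $\sigma'_n$ for $\Gamma$ drop out directly from the estimates above.
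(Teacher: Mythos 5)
The paper states this lemma without proof, so there is nothing to compare against; your argument is the standard one and is essentially correct. Two points deserve care. First, since $\tilde\sigma_n$ must be a homomorphism on the free group ${\rm F}$, you cannot simply ``declare'' $\tilde\sigma_n(\tilde a)$ to be the regular action when $\tilde a$ is a nontrivial word in the free generators: you should first enlarge the generating set so that ${\rm F}$ is free on $S\cup A$ (this is exactly the device the paper uses in the Remark following the lemma, where $\tilde\Gamma$ is regarded as generated by $\rho(S)\cup A$), after which the elements of $A$ are themselves free generators and may be redefined at will; alternatively, weaken the conclusion to ``$\tilde\sigma_n(\tilde a)$ is Hamming-close to the regular action'', which is all that the induced approximation of $\Gamma$ requires. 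Second, in the freeness argument the restriction of $\tilde\sigma_n(g)$ to a preserved orbit is not right multiplication by some $a_0$ but a translation composed with the conjugation automorphism $\phi_g$ of $A$; what the pigeonhole actually needs is only that the image of the transversal point $(m,e)$ is $(m,a_m)$ for some $a_m\in A$, so that some value $a_0$ occurs with positive density and the element $\widetilde{a_0}^{-1}g\notin\ker\pi$ fixes a positive-density set of points, contradicting soficity of $\tilde\sigma_n$. Finally, the free part of the $\rho_n$-action is automatically a union of full $A$-orbits, so no ``further points'' need to be discarded (and discarding fewer than a whole orbit would break $A$-invariance). With these adjustments the Hamming bookkeeping you describe goes through as stated.
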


Using the previous lemma makes it easy to study the corresponding limit actions.

\begin{lemma} \label{lem:inv}
Let ${\mathcal U} \in \beta \mathbb N$ be a non-principal ultrafilter and let ${\rm Q}_p$ be the limit $\tilde \Gamma$-action of a sofic approximation $(\sigma_n \colon {\rm F} \to {\rm Sym}(k_n))_n$ of $\tilde \Gamma$ as above. Then the limit $\Gamma$-action ${\rm P}_{\mathcal U}$ of the sofic approximation $(\sigma'_n \colon {\rm F} \to {\rm Sym}(m_n))_n$ of $\Gamma$ as above is given by the subalgebra of $A$-invariants, i.e., ${\rm P}_{\mathcal U} = ({\rm Q}_p)^A.$ 
\end{lemma}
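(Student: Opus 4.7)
The plan is to construct a natural measure-preserving, $\Gamma$-equivariant homomorphism of Boolean algebras from ${\rm P}_{\mathcal U}$ to $({\rm Q}_{\mathcal U})^A$ at the level of each sofic approximant and then pass to the ultraproduct; the core of the argument is verifying that this embedding is surjective onto the $A$-invariants.

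At each finite level, define $\iota_n \colon {\rm P}(m_n) \to {\rm P}(m_n \times A)$ by $S \mapsto S \times A$. This is an injective Boolean algebra homomorphism, and it is measure-preserving for the normalized counting measures because $|S \times A|/(|m_n|\,|A|) = |S|/|m_n|$. Since the preceding lemma guarantees that $\sigma_n$ restricted to $A$ acts strictly by right translation on the second coordinate, the image of $\iota_n$ is exactly the subalgebra of $A$-invariant subsets of ${\rm P}(m_n \times A)$, and $\iota_n$ intertwines $\sigma_n'$ (defined as the quotient of $\sigma_n$ by the $A$-action) with the restriction of $\sigma_n$ to the $A$-invariant part. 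Passing to the metric ultraproduct yields an isometric, ${\rm F}$-equivariant (and hence $\Gamma$-equivariant in the limit) embedding $\iota \colon {\rm P}_{\mathcal U} \hookrightarrow {\rm Q}_{\mathcal U}$ whose image is contained in $({\rm Q}_{\mathcal U})^A$.

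The reverse inclusion is the main technical point. Given a class $[T_n]_n \in ({\rm Q}_{\mathcal U})^A$ with $T_n \subseteq m_n \times A$, by definition of $A$-invariance in the ultraproduct $\lim_{n \to \mathcal U} d_n(T_n, a \cdot T_n) = 0$ for every $a \in A$. Set $f_n(x) \coloneqq |\{a \in A : (x,a) \in T_n\}|$ and define the majority rounding
$$S_n \coloneqq \{ x \in m_n : f_n(x) > |A|/2 \}.$$
On each fiber $\{x\} \times A$, the symmetric difference of $T_n$ and $S_n \times A$ has cardinality $\min(f_n(x), |A|-f_n(x))$. Combining this fact fiberwise with the elementary identity $\sum_{a \in A} |F \triangle (a + F)| = 2|F|(|A| - |F|)$ valid for any $F \subseteq A$, one obtains
$$d_n(T_n, S_n \times A) \;\leq\; \frac{1}{|A|} \sum_{a \in A \setminus \{0\}} d_n(T_n, a \cdot T_n).$$
Since $A$ is finite and each term on the right-hand side tends to $0$ along $\mathcal U$, we conclude that $[T_n]_n = \iota([S_n]_n)$ in ${\rm Q}_{\mathcal U}$, which finishes the argument.

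The main obstacle is precisely this rounding step: one must upgrade asymptotic $A$-invariance of the $T_n$ to honestly $A$-saturated approximants with quantitatively controlled error. The majority-vote construction combined with the variance-type identity above accomplishes exactly this, and the finiteness of $A$ is essential to turn a uniform per-element bound into an $\mathcal U$-vanishing global bound.
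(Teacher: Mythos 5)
Your argument is correct, and in fact it supplies more than the paper does: the paper states this lemma without proof (it is presented as an easy consequence of the preceding normal-form lemma). Your reduction to the normal form $\tilde\sigma_n$ on $m_n\times A$, the isometric embedding $S\mapsto S\times A$ onto the fiberwise $A$-saturated sets, and the limit equivariance are all as one would expect; the only substantive point is the surjectivity onto $({\rm Q}_{\mathcal U})^A$, i.e.\ that asymptotic $A$-invariance can be upgraded to exact $A$-saturation, and your majority rounding handles this correctly. Indeed, combining the fiberwise count $\min(f_n(x),|A|-f_n(x))$ with the identity $\sum_{a\in A}|F\,\triangle\,(a+F)|=2|F|(|A|-|F|)$ and the elementary bound $\min(f,|A|-f)\leq 2f(|A|-f)/|A|$ gives exactly your displayed estimate, so $[T_n]_n$ lies in the image of $\iota$; the one cosmetic caveat is that the intertwining of $\sigma_n'$ with $\tilde\sigma_n$ need only hold up to an error of vanishing density at each finite level (since $\tilde\sigma_n(g)$ commutes with the right $A$-action only approximately), which you correctly absorb into the limit.
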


Thus, the limit action of a sofic approximation of $\Gamma$, that is induced from a sofic approximation of $\tilde \Gamma$, is of a particular kind. Indeed, it arises as the $A$-invariants of a measured $\tilde \Gamma$-Boolean algebra, and we will make crucial use of this fact. We will now restrict to central extensions and show that for any such action, the cohomological classes obtained by pushing forward the classifying cocycle of the central extension vanish.

\begin{theorem} \label{thm:equalzero}
Let $\tilde{\Gamma}$ be a central extension of $\Gamma$ by the finite abelian group $A$ and let $\alpha \in {\rm H}^2(\Gamma,A)$ be the cohomology class that classifies this extension. Let $({\rm Q},\mu)$ be an $A$-free measured $\tilde \Gamma$-Boolean algebra and let ${\rm P}\coloneqq{\rm Q}^A$ be the subalgebra of $A$-invariants. Then, $$0=\theta^{\rm P}_*(\alpha) \in {\rm {\rm H}^2}(\Gamma, {\rm P}(A)).$$
\end{theorem}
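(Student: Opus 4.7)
The plan is to realize $\theta^{\rm P}_*(\alpha)$ as an explicit coboundary. The idea is that $A$-freeness of $Q$ should let us construct a ``universal'' element $\xi \in {\rm Q}(A)$ whose $A$-equivariance defect is $\theta^{\rm Q}$; then the failure of a set-theoretic section $s \colon \Gamma \to \tilde\Gamma$ of the central extension to be a homomorphism, when applied to $\xi$, will yield a $1$-cochain with values in ${\rm P}(A) = {\rm Q}(A)^A$ that trivialises $\theta^{\rm P}_*(\alpha)$.

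First, I would use the $A$-freeness hypothesis to produce a partition $1 = \bigsqcup_{a \in A} F_a$ in ${\rm Q}$ with $b \cdot F_a = F_{a+b}$ for all $a, b \in A$ (a fundamental-domain decomposition for the finite free action). Set $\xi := \sum_{a \in A} F_a \otimes a \in {\rm Q}(A)$. The identity $F \otimes a + F \otimes b = F \otimes (a+b)$, which follows directly from the defining relation of ${\rm Q}(A)$, together with $\sum_c F_c = 1$, then yields the transformation rule
$$\xi - b \cdot \xi = \sum_{c \in A} F_c \otimes b = 1 \otimes b = \theta^{\rm Q}(b) \qquad \text{for all } b \in A.$$

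Next, fix a set-theoretic section $s \colon \Gamma \to \tilde\Gamma$ with $s(g)s(h) = \alpha(g,h)\,s(gh)$ and define $\beta(g) := s(g) \cdot \xi - \xi \in {\rm Q}(A)$. Centrality of $A$ in $\tilde\Gamma$, combined with $s(g) \cdot \theta^{\rm Q}(a) = \theta^{\rm Q}(a)$ (trivial action on constants), shows that $a \cdot \beta(g) = \beta(g)$ for every $a \in A$, so $\beta$ takes values in ${\rm Q}(A)^A = {\rm P}(A)$. A direct calculation in the cochain complex gives
$$\delta\beta(g,h) = g \cdot \beta(h) - \beta(gh) + \beta(g) = s(g)s(h)\cdot \xi - s(gh)\cdot \xi.$$
Substituting $s(g)s(h) = \alpha(g,h)\,s(gh)$ and applying the transformation rule from the previous step to $s(gh)\cdot \xi$ (again using centrality to commute $\alpha(g,h) \in A$ past $s(gh)$) collapses this to $\delta\beta(g,h) = -\theta^{\rm Q}(\alpha(g,h)) = -\theta^{\rm P}(\alpha(g,h))$. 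Hence $\theta^{\rm P}_*(\alpha) = -[\delta\beta] = 0$ in ${\rm H}^2(B\Gamma, {\rm P}(A))$.

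The main obstacle is the first step: extracting the equivariant partition $\{F_a\}$ from $A$-freeness in the measured Boolean algebra setting, since in the intended applications ${\rm Q}$ will be a metric ultraproduct and one must check that the operative notion of freeness both survives ultraproducts and yields a genuine measurable fundamental domain. Everything that follows is a formal manipulation: the role of $A$-freeness is precisely to manufacture $\xi$, while the coboundary computation is driven purely by the centrality of the extension and the standard cocycle identity for $s$.
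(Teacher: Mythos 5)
Your argument is correct, and it reaches the conclusion by a genuinely different route than the paper, although both proofs turn on exactly the same use of the $A$-freeness hypothesis. The paper applies naturality of the five-term exact sequence of the Lyndon--Hochschild--Serre spectral sequence for $1\to A\to\tilde\Gamma\to\Gamma\to1$: since $\alpha$ dies in ${\rm H}^2(\tilde\Gamma,A)$, its image $\theta^{\rm P}_*(\alpha)$ dies in ${\rm H}^2(\tilde\Gamma,{\rm Q}(A))$, and the map ${\rm H}^2(\Gamma,{\rm P}(A))\to{\rm H}^2(\tilde\Gamma,{\rm Q}(A))$ is injective because ${\rm H}^1(A,{\rm Q}(A))=0$, the point being that $A$-freeness makes ${\rm Q}(A)\cong{\rm hom}_{\mathbb Z}(\mathbb Z[A],{\rm P}(A))$ a coinduced $\mathbb Z[A]$-module. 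Your equivariant partition $\{F_a\}_{a\in A}$ is precisely the Boolean-algebra incarnation of that coinduced structure, and your $\xi$ with $\xi-b\cdot\xi=\theta^{\rm Q}(b)$ is an explicit primitive for the $1$-cocycle $b\mapsto\theta^{\rm Q}(b)$ whose existence the paper extracts abstractly from ${\rm H}^1(A,{\rm Q}(A))=0$; the computation $\delta\beta=-\theta^{\rm P}\circ\alpha$, with $A$-invariance of $\beta$ forced by centrality, is the chain-level content of exactness at ${\rm H}^2(\Gamma,{\rm P}(A))$. What your version buys is an explicit trivializing $1$-cochain, which is exactly what one needs for quantitative control along a sofic approximation (the Remark following the theorem in the paper sketches essentially your computation in that guise); what the paper's version buys is brevity and independence of any choice of section or resolution. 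Two points to tie down, neither of which is a real gap: first, ``$A$-free'' must indeed be read as the existence of a fundamental-domain partition $1=\bigsqcup_{a\in A}F_a$ with $b\cdot F_a=F_{a+b}$ --- this is the reading the paper itself relies on, and it does pass to metric ultraproducts in the intended application since each finite level has the form $m_n\times A$ with $A$ acting regularly on the second factor; second, your calculation lives in the inhomogeneous bar complex while the paper's metric cochain complex uses a simplicial model ${\rm C}^*(B\Gamma,-)$, but vanishing of a class in ${\rm H}^2(\Gamma,{\rm P}(A))$ is resolution-independent, so this is harmless.
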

\begin{proof} Consider the natural transformation of the low degree part of the Lyndon-Hochschild-Serre spectral sequence:
\begin{equation*}
\begin{tikzcd}
 {\rm H}^1(A,A)^{\Gamma} \ar[r] \ar[d] & {\rm H}^2(\Gamma,A) \ar[r] \ar[d,"\theta^P_*"] & {\rm H}^2(\tilde \Gamma,A) \ar[d] \\
 {\rm H}^1(A,{\rm Q}(A))^{\Gamma}  \ar[r] & {\rm H}^2(\Gamma,{\rm P}(A)) \ar[r] & {\rm H}^2(\tilde \Gamma,{\rm Q}(A))
\end{tikzcd}
\end{equation*} 
with ${\rm P}={\rm Q}^A$ by definition. Note that $\alpha \in {\rm H}^2(\Gamma,A)$ maps to zero in ${\rm H}^2(\tilde \Gamma,A)$. Hence, $\theta^{\rm P}_*(\alpha)$ maps to zero in ${\rm H}^2(\tilde\Gamma,{\rm Q}(A))$. The claim follows if we can show that ${\rm H}^1(A,{\rm Q}(A))^{\Gamma}=0.$ However, ${\rm Q}(A) = {\rm hom}_{\mathbb Z}(\mathbb Z[A],{\rm P}(A))$ as $\mathbb Z[A]$-module, since ${\rm Q}$ is assumed to be $A$-free. Hence, ${\rm H}^1(A,{\rm Q}(A)) =0$ and this finishes the proof. \end{proof}

\begin{remark} The vanishing of $\theta^{\rm P}_*(\alpha)$ for the limit action of a sofic approximation of $\Gamma$ which is induced by a sofic approximation $\tilde{\Gamma}$ can be proved directly along the approximation. We provide a sketch of the proof here: Let $S$ be a finite generating set of $\Gamma$ and let ${\rm F}_S$ be the free group on $S$. Pick a set-theoretic lift $\rho \colon \Gamma \to \tilde{\Gamma}$ and consider $\tilde{\Gamma}$ as generated by $\rho(S) \cup A$. Then the induced sofic approximation $(\sigma'_n \colon {\rm F}_S \to {\rm Sym}(m_n))_n$, when viewed as a sequence of $S$-edge-labeled graphs, comes endowed with an additional $A$-labelling on edges. This $A$-labeling measures the failure of $\rho$ being a homomorphism. In other terms, we obtain a map $\beta_n \colon S \to {\rm map}(\{1,\dots,m_n\}, A)$ measuring non-multiplicativity on the good part of the sofic approximation. On the other side $\alpha \in {\rm H}^2(\Gamma,A)$ is represented by a $2$-cocycle $\alpha' \colon R \to A$, where $R$ is a set of generating relations of $\Gamma$ and $\alpha(r)$ is defined to be $\tilde \pi(r) \in A,$ where $\tilde \pi \colon {\mathrm F}_S \to \tilde \Gamma$ is the extension of $\rho|_S$ to a homomorphism on ${\rm F}_S$. It is easy to see that $\delta_1(\beta_n)$ almost represents the image of $\alpha'$ in ${\rm map}(R,{\rm map}(\{1,\dots,m_n\},A))$. Indeed, $\delta_1(\beta_n)(r)$ is computed by adding up the errors along a relation $r \in R$. On the good part of the sofic approximation, this is just a way of computing $\alpha'(r).$ Taking the limit, we obtain $\theta^{\rm P}_*(\alpha)=0.$ This finishes the sketch of the argument.
\end{remark}

\subsection{Stability} \label{sec:stable}

By now there is a zoo of notions of stability of almost representations in permutations; including being \emph{stable} \cite{MR2500002}, \emph{weakly stable} \cite{MR3350728}, \emph{flexibly stable}, \emph{weakly flexibly stable} \cite{MR4027744}, \emph{very flexibly stable}, and \emph{weakly very flexibly stable} \cite{MR4134896}. Additional confusion arises since the notion of very flexible stability is weaker than flexible stability itself.

In retrospect and in view of the results of \cite{MR4027744}, the unflexible notions of stability are somewhat pathological and too rigid in order to be useful beyond the amenable case, see \cite{MR3999445}. The difference between weak and non-weak notions refers to the questions if one wants to assume from the start that the almost representations are almost free. Since we are only interested in sofic approximations this requirement is automatic in our setting.

Anyhow, in view of this discussion it needs a profound justification and linguistic skillfuless to introduce yet another notion of stability. Let's start with an observation:

\begin{lemma} \label{lem:flex}
A group $\Gamma$ is weakly flexibly stable if and only if the limit action of any sofic approximation is $\Gamma$-equivariantly internally isomorphic to a metric ultraproduct of finite $\Gamma$-actions.
\end{lemma}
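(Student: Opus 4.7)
The plan is to read the equivalence as a dictionary between two descriptions of the same object. On the permutation side, weak flexible stability says that any almost-homomorphism $\sigma_n \colon {\rm F} \to {\rm Sym}(k_n)$ witnessing soficity of $\Gamma$ can be approximated in normalized Hamming distance, after a flexible enlargement from $k_n$ to $m_n$ with $m_n/k_n \to 1$, by an honest $\tau_n \colon \Gamma \to {\rm Sym}(m_n)$. On the Boolean side, to each $\sigma_n$ one attaches the finite measured ${\rm F}$-Boolean algebra $({\rm P}_n, \mu_n)$ consisting of subsets of $\{1,\dots,k_n\}$ with normalized counting measure, and the sought internal $\Gamma$-equivariant isomorphism of ${\rm P}_{\mathcal U}$ with a metric ultraproduct of finite $\Gamma$-actions is essentially the same data as a sequence of honest approximations $\tau_n$, after one identifies the finite $\Gamma$-Boolean algebra ${\rm Q}_n$ associated to $\tau_n$ with the ${\rm F}$-action on ${\rm P}_n$ via a canonical inclusion.

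For the forward implication, I would fix a sofic approximation $(\sigma_n)$, apply weak flexible stability to obtain honest $\tau_n \colon \Gamma \to {\rm Sym}(m_n)$ with $m_n/k_n \to 1$ such that the pullback $\tau_n \circ \pi$ and the trivial extension of $\sigma_n$ agree in Hamming distance on each generator up to $o(1)$, and then pass to the ultraproduct along a fixed non-principal $\mathcal U \in \beta \mathbb N$. The canonical inclusion $\{1,\dots,k_n\} \subset \{1,\dots,m_n\}$ induces a measure-preserving (in the $\mathcal U$-limit, since $k_n/m_n \to 1$) embedding of Boolean algebras, and Hamming-closeness makes the actions of $\sigma_n$ and of $\tau_n \circ \pi$ coincide on the limit. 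This identifies ${\rm P}_{\mathcal U}$ with the metric ultraproduct of the finite $\Gamma$-Boolean algebras ${\rm Q}_n$ coming from $\tau_n$, via a $\Gamma$-equivariant isomorphism given by the sequence $(\phi_n)_n$ of finite-stage inclusions, i.e., an internal isomorphism.

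For the converse, I would take a sofic approximation $(\sigma_n)$ and, by hypothesis, an internal $\Gamma$-equivariant isomorphism $[\phi_n] \colon {\rm P}_{\mathcal U} \to {\rm Q}_{\mathcal V}$ with each ${\rm Q}_j$ a finite $\Gamma$-Boolean algebra. Internality means the isomorphism is represented by a sequence of measure-preserving bijections $\phi_n \colon {\rm P}_n \to {\rm Q}_{j(n)}$ whose ${\rm F}$-equivariance, with respect to $\sigma_n$ on the left and the honest $\Gamma$-action on ${\rm Q}_{j(n)}$ pulled back through $\pi$ on the right, holds up to Hamming error tending to $0$ along $\mathcal U$. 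Conjugating back through $\phi_n$ gives a genuine finite $\Gamma$-action on a set of size $|{\rm Q}_{j(n)}|$ Hamming-close to $\sigma_n$; after absorbing the $o(k_n)$ discrepancy $|k_n - |{\rm Q}_{j(n)}||$ into a flexible enlargement and a standard diagonal argument to eliminate the dependence on $\mathcal U$, one obtains the required honest $\tau_n$.

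The main obstacle is to make the notion of \emph{internal} $\Gamma$-equivariant isomorphism operational at the finite stage: one must verify that any $\Gamma$-equivariant measure isomorphism between the two ultraproducts which is represented by a sequence of finite-stage measure bijections can be promoted, after $o(k_n)$ flexible enlargement, to a genuine conjugation of $\sigma_n$ into an honest $\Gamma$-representation. Once the correspondence between internal isomorphisms in the ultraproduct and sequences of approximate measure-preserving equivariances at the finite level is set up precisely, both directions reduce to routine bookkeeping on normalized Hamming distances and ultrafilter limits.
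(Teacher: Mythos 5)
Your dictionary between the two pictures is the right one, and the forward direction (honest $\tau_n$ that are Hamming-close to $\sigma_n$ after a flexible enlargement induce an internal $\Gamma$-equivariant isomorphism in the limit) is fine. But be aware that the paper does not prove this lemma by bookkeeping: its entire proof is a citation to Ross's theorem on automorphisms of the Loeb algebra \cite{MR919287}, and that citation is doing precisely the work you defer to your final paragraph.

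The gap is in the converse. You assert that ``internality means the isomorphism is represented by a sequence of measure-preserving bijections $\phi_n \colon {\rm P}_n \to {\rm Q}_{j(n)}$,'' but an internal isomorphism of the ultraproduct measure algebras is represented by a sequence of maps of \emph{Boolean algebras} that are only approximately measure-preserving, approximately surjective and approximately multiplicative --- indeed your own forward direction produces finite-stage maps $2^{\{1,\dots,k_n\}} \to 2^{\{1,\dots,m_n\}}$ that are neither surjective nor measure-preserving at any finite stage. A measure-preserving Boolean homomorphism between finite power sets with normalized counting measure need not be induced by any point map (e.g.\ $2^{\{1,\dots,k\}} \to 2^{\{1,\dots,2k\}}$ sending each atom to a pair of atoms), and an ultralimit of such maps can perfectly well be an isomorphism of the limit algebras. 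So you cannot simply ``conjugate back through $\phi_n$'' to turn $\sigma_n$ into a permutation action: you must first realize the internal algebra isomorphism by point bijections of the underlying finite sets up to sets of vanishing measure, and that lifting from measure-algebra maps to point maps is the nontrivial content of \cite{MR919287}, not routine bookkeeping. Once that lifting is in hand, the remainder of your argument (absorbing the cardinality discrepancy into a flexible enlargement and diagonalizing to remove the dependence on $\mathcal U$) does go through.
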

\begin{proof}
This is a consequence of the results in \cite{MR919287}.
\end{proof}

The requirement that the isomorphism be internal is non-trivial, but leads us in a different direction and we will ignore this point.

\begin{definition} \label{def:stable}
We say that a group $\Gamma$ is \emph{stable in finite actions} if the limit action of any sofic approximation is weakly contained in the family of finite $\Gamma$-actions. Equivalently, the limit action of any sofic approximation of $\Gamma$ is contained in a metric ultraproduct of finite $\Gamma$-actions.
\end{definition}

It is clear from Lemma \ref{lem:flex} that any weakly flexibly stable group $\Gamma$ is stable in finite actions. Let us also note that any sofic group which is stable in finite actions must be residually finite. Using the definition of weak containment and following the discussion in \cite{MR4434538}, let us now characterize and give a criterion that implies stability in finite actions in more familiar terms. Spelling everything out, we obtain the following result:

\begin{lemma} Let $\Gamma$ be a countable discrete group. Consider the following conditions:
\begin{enumerate}[$(i)$]
    \item The group $\Gamma$ be stable in finite actions.
    \item For all $K \subset \Gamma$ finite, $d \in \mathbb N$, and $\varepsilon>0$, there exists $\delta>0$ and $F\subset \Gamma$ finite, such that for every $n \in \mathbb N$, every map $\varphi \colon \Gamma \to {\rm Sym}(n)$ satisfying
    $$d_n(\varphi(gh),\varphi(g)\varphi(h))<\delta \quad \forall g,h \in F$$ and
    $$d_n(1_n,\varphi(g))>1-\delta \quad \forall g \in F \setminus \{1_\Gamma\},$$
    and any partition $\{1,\dots,n\}=A_1 \cup \cdots \cup A_d$, there exists a finite set $X$, a homomorphism $\psi \colon \Gamma \to {\rm Sym}(X)$, and a partition $X=B_1\cup \cdots \cup B_d$ such that
    $$\left| \frac{|A_i \cap \varphi(g)A_j|}{n} - \frac{|B_i \cap \psi(g)B_j|}{|X|} \right| < \varepsilon, \quad \forall i,j \in \{1,\dots,d\}, g \in K.$$
    \item For all $K \subset \Gamma$ finite and $\varepsilon>0$, there exists $\delta>0$ and $F\subset \Gamma$ finite, such that for every $n \in \mathbb N$ and every map $\varphi \colon \Gamma \to {\rm Sym}(n)$ satisfying
    $$d_n(\varphi(gh),\varphi(g)\varphi(h))<\delta \quad \forall g,h \in F$$ and
    $$d_n(1_n,\varphi(g))>1-\delta \quad \forall g \in F \setminus \{1_\Gamma\},$$
    there exists $m \in \mathbb N$, a set $X$ of cardinality in $[nm,(1+\varepsilon)nm]$, a subset $X_0 \subseteq X$ of cardinality $nm$, a $m$-$1$-map $\pi \colon X_0 \to \{1,\dots,n\},$ and a homomorphism $\psi \colon \Gamma \to {\rm Sym}(X)$ such that
    $$|\{x \in X_0 \mid \pi(\psi(g)x) \neq \varphi(g) \pi(x) \}|< \varepsilon |X_0|, \quad \forall g \in K.$$
\end{enumerate}
Then, the following implications hold:
$$(i) \Leftrightarrow (ii) \Leftarrow (iii).$$
\end{lemma}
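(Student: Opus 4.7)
The plan is to view $(i) \Leftrightarrow (ii)$ as a Boolean-algebraic rephrasing of the Kechris-style partition characterization of weak containment, and to deduce $(iii) \Rightarrow (ii)$ by transporting partitions along the approximately equivariant $m$-to-$1$ map $\pi$ that $(iii)$ produces.

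For $(iii) \Rightarrow (ii)$: given $K, d, \varepsilon$, I would apply $(iii)$ with $\varepsilon' \coloneqq \varepsilon/(d+2)$ to obtain $\delta$ and $F$. Given a $(\delta, F)$-good $\varphi$ and a partition $(A_1, \ldots, A_d)$ of $\{1,\dots,n\}$, I pull the parts back by setting $B_i \coloneqq \pi^{-1}(A_i)$ for $i \geq 2$ and $B_1 \coloneqq \pi^{-1}(A_1) \cup (X \setminus X_0)$. Using that $\pi$ is $m$-to-$1$ on $X_0$ and that the bad set $E_g \coloneqq \{x \in X_0 : \pi(\psi(g) x) \neq \varphi(g) \pi(x)\}$ has cardinality less than $\varepsilon' |X_0|$, a direct count yields
\[
\bigl| |B_i \cap \psi(g) B_j| - m \cdot |A_i \cap \varphi(g) A_j| \bigr| = O(\varepsilon' n m),
\]
and dividing by $|X| \in [nm, (1+\varepsilon') n m]$ gives the bound required by $(ii)$.

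The bridge for $(i) \Leftrightarrow (ii)$ is the conversion between sofic approximations $\sigma_n \colon {\rm F} \to {\rm Sym}(k_n)$ and almost-homomorphisms $\varphi_n \colon \Gamma \to {\rm Sym}(k_n)$: given a fixed set-theoretic section $s \colon \Gamma \to {\rm F}$ of ${\rm F} \twoheadrightarrow \Gamma$, the element $s(gh)^{-1} s(g) s(h)$ is a fixed word in conjugates of elements of $R$, so $\ell_{k_n}(\sigma_n(r)) \to 0$ for $r \in R$ propagates to show $\varphi_n \coloneqq \sigma_n \circ s$ satisfies the hypotheses of $(ii)$ on any prescribed $(F, \delta)$ along any non-principal ultrafilter. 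Conversely, a sequence of progressively-good $\varphi_k$ lifts through $s$ to a genuine sofic approximation on ${\rm F}$. Using this, for $(ii) \Rightarrow (i)$ I would take a sofic approximation with limit action ${\rm P}_{\mathcal U}$ and a partition $(X_i)$, lift to representative sequences $(A_{i,n})_n$ adjusted to be partitions of $\{1,\ldots,k_n\}$ for every $n$ (e.g.\ by subtracting lower indices), apply $(ii)$ along $\mathcal U$, and obtain a finite $\Gamma$-action $\varepsilon$-approximating $(\mu_{\mathcal U}(X_i \cap g X_j))_{i,j,g \in K}$; this is precisely the Kechris criterion for weak containment of ${\rm P}_{\mathcal U}$ in $\mathcal P_{\rm fin}$. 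For $(i) \Rightarrow (ii)$ I would argue by contradiction: witnesses to the failure of $(ii)$ lift to a sofic approximation whose limit partition is, by $(i)$, approximable to within $\varepsilon/2$ by some finite $\Gamma$-action $(Y, \psi, (B_i))$, and this same $(Y, \psi, (B_i))$ would then $\varepsilon$-approximate $(A_{1,k}, \ldots, A_{d,k})$ for $k$ large in $\mathcal U$, contradicting the choice of witnesses.

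The main obstacle I expect is the bookkeeping around the section $s$: controlling how the word-length of $s(gh)^{-1} s(g) s(h)$ (as a product of conjugates of relators) enters the decay rate of the associated soficity error, so that the $(F,\delta)$-thresholds of $(ii)$ genuinely apply along a sofic sequence. Once this conversion is in place, the three implications reduce to a routine translation between weak containment, metric ultraproducts, and finite partition approximation.
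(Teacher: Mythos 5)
Your proposal is correct and follows exactly the route the paper intends: the paper gives no written proof of this lemma (it presents it as the result of ``spelling everything out''), and the content is precisely the standard dictionary between sofic approximations $\sigma_n \colon {\rm F} \to {\rm Sym}(k_n)$ and almost-homomorphisms $\varphi \colon \Gamma \to {\rm Sym}(n)$, the Kechris-style partition criterion for weak containment in $\mathcal P_{\rm fin}$, and, for $(iii)\Rightarrow(ii)$, pulling partitions back along the branched cover $\pi$ with the overflow $X\setminus X_0$ absorbed into one block. The bookkeeping you flag (the section $s$, and the constant $\varepsilon/(d+2)$, where $\varepsilon/3$ already suffices since the error is controlled by $|E_g|+|X\setminus X_0|$ plus the normalization discrepancy $|X|-nm$) is routine and hides no gap.
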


Condition $(iii)$ is a way of understanding a sufficient condition for stability in finite actions in terms of the notion of \emph{branched cover} (of an almost homomorphism) as studied by Ioana \cite{MR4434538}*{Definition 1.2}. In particular, a group is stable in finite actions if every sofic approximation has a branched cover which is equivalent to a sequence of finite actions, see \cite{MR4434538}*{Remark 1.4}. As pointed out to us by Adrian Ioana, it is likely that, as a consequence of results in \cite{MR4434538}, Condition $(iii)$  for Kazhdan groups might also imply flexible stability.
See Remark \ref{rem:discu} for examples of groups which are stable in finite actions and potential non-examples.
As a positive result, we can record the following inheritance property:
\begin{theorem} \label{thm:inh}
Let $\Gamma = \Lambda \rtimes H$, such that:
\begin{enumerate}[$(i)$]
\item The group $\Lambda$ is stable in finite actions.
\item[$(ii)$] Every finite index normal subgroup of $\Lambda$ contains a finite index subgroup, which is normalized by $H$. In particular, this holds if $\Lambda$ is finitely generated.
\item[$(iii)$] The group $H$ is residually finite and amenable.
\end{enumerate}
Then, the group $\Gamma$ is stable in finite actions.
\end{theorem}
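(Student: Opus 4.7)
The plan is, given a sofic approximation $(\sigma_n)_n$ of $\Gamma$ with limit $\Gamma$-Boolean algebra $(P_\infty,\mu_\infty)$, to show that $P_\infty$ is weakly contained in the family of finite $\Gamma$-actions. The argument uses stability of $\Lambda$ to approximate the $\Lambda$-structure, condition $(ii)$ to promote finite $\Lambda$-actions to finite $\Gamma$-actions, and residual finiteness together with amenability of $H$ to correct the $H$-direction.

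First, restriction of each $\sigma_n$ to $\Lambda$ is a sofic approximation of $\Lambda$ whose limit is $P_\infty$ regarded as a $\Lambda$-Boolean algebra. By hypothesis $(i)$, this limit is weakly contained in finite $\Lambda$-actions, that is, captured on any finite subalgebra and finite subset of $\Lambda$ by disjoint unions of coset spaces $\Lambda/N_j$ with $[\Lambda:N_j]<\infty$. Using $(ii)$, each $N_j$ may be replaced by a finite-index subgroup $N'_j$ normalised by $H$, which is then normal in $\Gamma$; this refines partitions and preserves the approximation. Each $\Lambda/N'_j$ carries a canonical finite $\Gamma$-action via $(\lambda,h)\cdot(\mu N'_j)=(\lambda h\mu h^{-1})N'_j$, providing a finite $\Gamma$-set $Y$ that captures the $\Lambda$-structure of the limit.

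Next, restriction of $\sigma_n$ to $H$ yields a sofic approximation of $H$ whose limit is $P_\infty$ regarded as an $H$-Boolean algebra. By the theorem of Becker--Lubotzky--Thom, any residually finite amenable group is weakly flexibly stable, hence stable in finite actions by Lemma~\ref{lem:flex}. Thus this $H$-limit is weakly contained in finite $H$-actions, and on any fixed finite $H$-invariant subalgebra of $P_\infty$ is approximated by a finite $H$-set $T=H/H_k$ for some finite-index normal subgroup $H_k\trianglelefteq H$. The product $Z=Y\times T$ then becomes a finite $\Gamma$-set via $(\lambda,h)\cdot(y,t)=((\lambda,h)\cdot y,\;h\cdot t)$, a formula directly verified from the semidirect structure using that $Y$ is a $\Gamma$-set and $T$ an $H$-set. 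Given any partition $A_1,\dots,A_d$ of $P_\infty$ and finite $K\subseteq\Gamma$, one refines the partition by all $K$-translates, chooses $Y$ and $T$ fine enough, and assembles a suitable product-type partition on $Z$ matching the joint measures $\mu_\infty(A_{i_0}\cap g_1A_{i_1}\cap\cdots)$ to within the prescribed error.

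The main obstacle is this last assembly step: verifying that the partition on $Y\times T$ reproduces the mixed-translate joint measures. The key point is that in any sofic approximation of $\Lambda\rtimes H$, the limit $H$- and $\Lambda$-actions satisfy the semidirect-product relations exactly, so that the $H$-action on the $\Lambda$-invariant subalgebra generated by the refined partition agrees, up to vanishing error, with the canonical conjugation action inherited from the $\Gamma$-structure on $Y$ afforded by $(ii)$. The residual ``free'' $H$-direction, transverse to this $\Lambda$-invariant piece, is then captured by $T$ via a F\o lner or Rokhlin-tower argument exploiting amenability of $H$. Formalising this decomposition of the $H$-direction into its canonical and free parts is the technical heart of the argument.
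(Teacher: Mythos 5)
Your first step is fine and is exactly the first observation the paper makes: the restriction of the limit action to $\Lambda$ is the limit action of the restricted sofic approximation, hence weakly contained in finite $\Lambda$-actions by $(i)$, and condition $(ii)$ lets you replace the finite-index subgroups by ones normalized by $H$ so that the coset actions extend to $\Gamma$. The gap is in the assembly step, and it is not a technicality you can defer: the product $Z=Y\times T$ with the action $(\lambda,h)\cdot(y,t)=((\lambda,h)\cdot y,\,h\cdot t)$ is simply not the right finite model. Two things go wrong. First, the weak-containment witness for the $\Lambda$-restriction is only a $\Lambda$-equivariant (approximate) embedding of a finite subalgebra; nothing forces it to intertwine the $H$-action on ${\rm P}_\infty$ with the conjugation action on $Y$, so the claim that the $H$-action on the $\Lambda$-generated subalgebra ``agrees up to vanishing error'' with the canonical one on $Y$ is unjustified. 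Second, even granting that, the joint distribution of mixed $\Lambda$- and $H$-translates of a partition in the limit action is in general not of product type, so no product-type partition of $Y\times T$ can reproduce it; the correct finite/ultraproduct model built from a $\Lambda$-action on $Y$ is the \emph{co-induced} action ${\rm CInd}_\Lambda^\Gamma(Y)$, which as a space is $Y^{H}$ with a skew action, not $Y\times T$.

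What your sketch is implicitly trying to re-prove in the ``F\o lner or Rokhlin-tower'' sentence is precisely the theorem of Bowen--Tucker-Drob (\cite{MR3047068}*{Theorem 1.1}, answering Kechris's Problem A.4): if $\Gamma/\Lambda$ is amenable, then any p.m.p.\ $\Gamma$-action is weakly contained in the co-induction of its restriction to $\Lambda$. This is the missing key lemma, and it is a genuine theorem, not an assembly detail. The paper's route (both here and in Theorem \ref{thm:lubsh}) is: (1) the $\Lambda$-restriction is weakly contained in finite $\Lambda$-actions by $(i)$; (2) by amenability of $H$ and Bowen--Tucker-Drob, the $\Gamma$-action is weakly contained in the co-induction of that restriction; (3) by $(ii)$ and residual finiteness of $H$, co-inductions of finite $\Lambda$-actions are weakly contained in finite $\Gamma$-actions (this is where Kechris's argument on p.~488 of \cite{MR2931406} and his Lemma 4.15 enter). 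Your proposal would become correct if you replaced the product construction by the co-induction and cited or reproved these two ingredients; as written, the ``technical heart'' you flag is exactly the part that is missing.
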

\begin{proof} One can follow almost verbatim the argument on page 488 in \cite{MR2931406}, while noting that the restriction of the limit action of a sofic approximation of $\Gamma$ is the limit action of the induced sofic approximation of $\Lambda$, and that Problem ${\rm A}.4$ on page 505 of the same paper has been solved in \cite{MR3047068}.
\end{proof}

Thus, as a consequence of the preceding theorem, $\mathbb F_2 \times \mathbb Z$ is an example of a group which is stable in finite actions, but not flexibly stable, see \cite{MR4134896}.
\begin{remark}
Theorem \ref{thm:inh} should be compared to \cite{MR2931406}*{Conjecture 4.14} which was proved as \cite{MR3047068}*{Theorem 1.4}. 
There, it is not assumed that the extension of $H$ by $\Lambda$ is a crossed product. However, the argument on \cite{MR2931406}*{page 488} seems to be incomplete without further assumptions. Indeed, in the argument, it is assumed implicitly that a finite extension of a residually finite amenable group is again residually finite. While this is true for crossed products, counterexamples to the general case have been produced in \cite{erschler}.\end{remark}

Another line of argument can be extracted from the proof of  \cite{MR2077037}*{Theorem 2.8} by Lubotzky--Shalom. Recall, 
a group is termed {\rm LERF} (for locally extended residually finite) if every finitely generated subgroup is closed in the profinite topology

\begin{theorem} \label{thm:lubsh}
Let $\Gamma$ be a group and $\Lambda\leq \Gamma$ be a normal subgroup, such that:
\begin{enumerate}[$(i)$]
\item The group $\Lambda$ is stable in finite actions.
\item Every finitely generated subgroup of $\Lambda$ is closed in the profinite topology of $\Gamma$. In particular, this holds if $\Gamma$ is {\rm LERF}.
\item The group $\Gamma/\Lambda$ is amenable.
\end{enumerate}
Then, the group $\Gamma$ is stable in finite actions.
\end{theorem}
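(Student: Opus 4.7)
The plan is to adapt Lubotzky--Shalom's argument for residual finiteness of amenable LERF-like extensions, upgrading the conclusion from separating a single element by a finite quotient to approximating the full marginal structure of a limit action. Let $({\rm P}_{\mathcal U},\mu_{\mathcal U})$ be the limit action of a sofic approximation $(\sigma_n)_n$ of $\Gamma$; by Definition~\ref{def:stable}, we must embed it $\Gamma$-equivariantly into a metric ultraproduct of finite $\Gamma$-actions. The restricted sequence $(\sigma_n|_{\Lambda})_n$ is a sofic approximation of $\Lambda$ whose limit action is $({\rm P}_{\mathcal U},\mu_{\mathcal U})|_{\Lambda}$; hypothesis $(i)$ therefore produces a $\Lambda$-equivariant measure-preserving embedding of $({\rm P}_{\mathcal U},\mu_{\mathcal U})|_{\Lambda}$ into a metric ultraproduct $\prod_{\mathcal V}({\rm Q}_j,\nu_j)$ of finite $\Lambda$-actions, which after decomposing into transitive pieces we may take to be ${\rm Q}_j = \Lambda/K_j$ for finitely generated finite-index subgroups $K_j \leq \Lambda$.

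By $(ii)$, each $K_j$ is an intersection of finite-index subgroups of $\Gamma$. Hence for every finite subset $F \subseteq \Lambda$ one can find a finite-index subgroup of $\Gamma$, whose normal core $N_j = N_j(F) \trianglelefteq \Gamma$ satisfies $F \cap N_j \subseteq K_j$. Consequently the finite $\Lambda$-set $\Lambda/(N_j \cap \Lambda)$ admits a $\Lambda$-equivariant measure-preserving surjection onto ${\rm Q}_j = \Lambda/K_j$ which matches the $F$-marginals. Enlarging $F$ along a cofinal filter and taking an ultraproduct, pulling back yields a $\Lambda$-equivariant measure-preserving embedding of $\prod_{\mathcal V}{\rm M}({\rm Q}_j)$ into $\prod_{\mathcal V}{\rm M}(\Gamma/N_j)|_{\Lambda}$, and in particular a $\Lambda$-equivariant embedding of $({\rm P}_{\mathcal U},\mu_{\mathcal U})|_{\Lambda}$ into the $\Lambda$-restriction of a metric ultraproduct of finite $\Gamma$-actions.

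To upgrade to a $\Gamma$-equivariant embedding, I invoke amenability $(iii)$. Since $\Lambda \trianglelefteq \Gamma$, the finite $\Gamma$-set $\Gamma/N_j$ decomposes as a $\Lambda$-set into copies of $\Lambda/(N_j \cap \Lambda)$ indexed by the finite $\Gamma/\Lambda$-set $\Gamma/\Lambda N_j$, on which $\Gamma/\Lambda$ acts by left translation. To capture the $\Gamma/\Lambda$-marginals of $({\rm P}_{\mathcal U},\mu_{\mathcal U})$, one further refines $N_j$ by pullbacks along $\Gamma \to \Gamma/\Lambda$ of finite-index subgroups of $\Gamma/\Lambda$ whose coset spaces are Følner with respect to any prescribed finite subset of $\Gamma/\Lambda$; amenability guarantees the existence of such refinements. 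A double metric ultraproduct---first over the Følner refinement, then over $j$---then produces the desired $\Gamma$-equivariant measure-preserving embedding of $({\rm P}_{\mathcal U},\mu_{\mathcal U})$ into a metric ultraproduct of finite $\Gamma$-actions.

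The principal difficulty is precisely this coordination step. The Følner refinement in $\Gamma/\Lambda$ must be arranged simultaneously with the approximation $F \cap N_j \subseteq K_j$ from $(ii)$, and the order of the two ultraproducts must be chosen so that refining $N_j$ for the sake of $\Gamma/\Lambda$-marginals neither violates the LERF-type containment nor disturbs the $\Lambda$-equivariant embedding already constructed. This is exactly the combinatorial core of the Lubotzky--Shalom argument on page of \cite{MR2077037}*{Theorem 2.8}, and its transplantation from the setting of a single separating quotient to the joint approximation of all marginals of a prescribed partition is what requires the uniformity of the weak containment furnished by $(i)$ together with the flexibility of Følner exhaustion furnished by $(iii)$.
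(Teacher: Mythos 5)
Your first two paragraphs are essentially sound (modulo the small point that hypothesis $(ii)$ only speaks of \emph{finitely generated} subgroups of $\Lambda$, while the point stabilizers $K_j$ of a finite transitive $\Lambda$-action need not be finitely generated when $\Lambda$ is not). The genuine gap is in the third paragraph, which is precisely the step you yourself flag as the ``principal difficulty'' and then do not carry out. You propose to capture the $\Gamma/\Lambda$-direction of the limit action by refining $N_j$ along finite-index subgroups of $\Gamma/\Lambda$ whose coset spaces are F{\o}lner, asserting that amenability guarantees these exist. It does not: an amenable group need not have \emph{any} proper finite-index subgroups (infinite simple amenable groups exist), and even when $\Gamma/\Lambda$ is residually finite, amenability alone does not produce a F{\o}lner sequence of finite quotients without an additional argument. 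More structurally, the $\Gamma/\Lambda$-part of any finite $\Gamma$-action $\Gamma/N$ factors through the finite quotient of $\Gamma/\Lambda$ by $\Lambda N/\Lambda$, so if $\Gamma/\Lambda$ is infinite simple this part is always a single point and your marginal-matching scheme has nothing to match with; the theorem is still true in that case, but for a reason your construction cannot see.

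The paper avoids this entirely by routing the amenability through co-induction rather than through finite quotients of $\Gamma/\Lambda$: by \cite{MR3047068}*{Theorem 1.1}, amenability of $\Gamma/\Lambda$ gives $a \prec {\rm CInd}_\Lambda^\Gamma(a|_\Lambda)$ for the limit action $a$; by $(i)$ and monotonicity of co-induction under weak containment, $a$ is then weakly contained in co-inductions of finite $\Lambda$-actions; and these co-induced actions --- which are not finite, but need not be --- are weakly contained in finite $\Gamma$-actions by hypothesis $(ii)$ together with \cite{MR2931406}*{Lemma 4.15}. In that argument the F{\o}lner sets live inside $\Gamma/\Lambda$ as (part of) a transversal over which the $\Lambda$-behaviour is spread out, not as finite quotients, which is what makes it work for arbitrary amenable quotients. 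To salvage your direct approach you would have to replace your third paragraph by this co-induction mechanism, or else restrict to the case where $\Gamma/\Lambda$ is residually finite amenable and still carry out the nontrivial coordination of the two approximations that you left open.
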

\begin{proof}
Let $\Gamma \curvearrowright ({\rm P},\mu)$ be the limit action of a sofic approximation of $\Gamma$. The, by $(i)$ the restriction to $\Lambda$ is weakly contained in finite $\Lambda$-actions. Thus, since the quotient $\Gamma/\Lambda$ is amenable by $(iii)$, the original action is weakly contained in the co-induction of finite $\Lambda$-actions using \cite{MR3047068}*{Theorem 1.1}. However, by $(ii)$ and \cite{MR2931406}*{Lemma 4.15}, any such action is weakly contained in the family of finite $\Gamma$-actions.
\end{proof}

Note that Theorem \ref{thm:lubsh} implies that surface groups are stable in finite actions. This is also a consequence of results in by Lazarovich--Levit--Minsky \cite{minsky} and the remark after Definition \ref{def:stable}.

\begin{question}
Is there a residually finite group, which is not stable in finite actions?
\end{question}
We believe the answer to this question should be positive; but it seems hard to come up with a concrete example, see also Remark \ref{rem:discu}.

\subsection{Proof of the main result}
\label{sec:proof}
Let us now state our main result precisely, prove it, and discuss some potential consequences.

\begin{theorem} \label{thm:main}
Let $\Gamma$ be a countable discrete group and $A$ be a finite abelian group. Assume the following conditions:
\begin{enumerate}[$(i)$]
    \item $\Gamma$ is residually finite and of finite type,
    \item $\Gamma$ satisfies a co-systolic inequality for all finite transitive actions with coefficients in $A$ and dimension $2$,
    \item there exists a central extension $\tilde \Gamma$ of $\Gamma$ by $A$ which is not residually finite.
\end{enumerate}
Then, the following implication holds:
If $\Gamma$ is stable in finite actions, then the group $\tilde \Gamma$ is not sofic.
\end{theorem}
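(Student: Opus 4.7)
The plan is to argue by contrapositive: assume that $\Gamma$ is stable in finite actions and that $\tilde\Gamma$ is sofic, and derive a contradiction by producing one cohomology class $\theta^{\rm P}_*(\alpha) \in {\rm H}^2(\Gamma,{\rm P}(A))$ that must simultaneously vanish (via Theorem \ref{thm:equalzero}) and be nonzero (via Theorem \ref{thm:neqzero}) for the limit action ${\rm P}$ of a well-chosen sofic approximation. Let $\alpha \in {\rm H}^2(\Gamma,A)$ be the class classifying $\tilde\Gamma$; this will be the test class.

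First I would fix a sofic approximation $(\sigma_n\colon {\rm F} \to {\rm Sym}(k_n))_n$ of $\tilde\Gamma$ and, applying the lemma on sofic approximations of finite central extensions, replace it by an equivalent one of the form $(\tilde\sigma_n\colon {\rm F} \to {\rm Sym}(m_n \times A))_n$ in which $A$ acts regularly on the right factor. Quotienting by this $A$-action yields a sofic approximation $(\sigma'_n\colon {\rm F}\to {\rm Sym}(m_n))_n$ of $\Gamma$. Fixing a non-principal ultrafilter ${\mathcal U} \in \beta\mathbb N$, I denote the resulting limit $\tilde\Gamma$-action by ${\rm Q}_{\mathcal U}$ and the induced limit $\Gamma$-action by ${\rm P}_{\mathcal U}$. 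Lemma \ref{lem:inv} identifies ${\rm P}_{\mathcal U} = ({\rm Q}_{\mathcal U})^A$, and since each $\tilde\sigma_n$ has $A$ acting freely (in fact regularly) on the second factor, the ultraproduct ${\rm Q}_{\mathcal U}$ is an $A$-free measured $\tilde\Gamma$-Boolean algebra. Thus Theorem \ref{thm:equalzero} applies verbatim to ${\rm Q}_{\mathcal U}$ and yields $\theta^{{\rm P}_{\mathcal U}}_*(\alpha) = 0$ in ${\rm H}^2(\Gamma, {\rm P}_{\mathcal U}(A))$.

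For the opposite direction I would invoke the standing hypothesis that $\Gamma$ is stable in finite actions: by Definition \ref{def:stable}, the limit $\Gamma$-action ${\rm P}_{\mathcal U}$ is weakly contained in the family of finite $\Gamma$-actions. Combining this with conditions (i), (ii), (iii) of the theorem, Theorem \ref{thm:neqzero} applies to ${\rm P}_{\mathcal U}$ and delivers $\theta^{{\rm P}_{\mathcal U}}_*(\alpha) \neq 0$ in ${\rm H}^2(\Gamma, {\rm P}_{\mathcal U}(A))$. This contradicts the vanishing obtained above, so $\tilde\Gamma$ cannot be sofic.

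The proof is essentially a collision of the two independently-established obstructions, and the real work is the machinery assembled beforehand. The only conceptual step that deserves explicit verification in my write-up is that ${\rm Q}_{\mathcal U}$ is genuinely $A$-free: $A$-freeness is visible in each $\tilde\sigma_n$ as the statement that the $A$-orbit of every element has full cardinality $|A|$, and this property passes to the metric ultraproduct because it is witnessed uniformly. If one wanted a direct, approximation-level argument (bypassing Theorem \ref{thm:equalzero}) the sketch in the remark after that theorem could be used instead, but invoking the theorem keeps the proof short and modular.
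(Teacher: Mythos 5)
Your proposal is correct and follows essentially the same route as the paper's proof: both pit Theorem \ref{thm:equalzero} (vanishing of $\theta^{\rm P}_*(\alpha)$ for the $A$-invariants of the limit action of a sofic approximation of $\tilde\Gamma$) against Theorem \ref{thm:neqzero} (non-vanishing under weak containment in finite actions, supplied by stability) to obtain the contradiction. Your explicit verification that the ultraproduct ${\rm Q}_{\mathcal U}$ is $A$-free is a welcome detail that the paper leaves implicit.
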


This result should be compared with a result of Bowen-Burton \cite{MR4105530}, who proved that if ${\rm PSL}_d(\mathbb Z)$, for $d \geq 5$, is flexibly stable, then there exists a non-sofic group. The advantage of our result is a weakening of the assumption of flexible stability and also that the resulting candidate of a non-sofic group is a bit more concrete. Nevertheless, we will spend the next section discussing the existence of a group that satisfies Conditions $(i)$-$(iii)$ in Theorem \ref{thm:main}.

\begin{proof}[Proof of Theorem \ref{thm:main}] Let $\alpha \in {\rm H}^2(\Gamma,A)$ be the cohomology class classifying the central extension $\tilde \Gamma$. Assume that $\tilde \Gamma$ is sofic and let ${\rm Q}$ be the associated limit $\tilde \Gamma$-action and ${\rm P}\coloneqq {\rm Q}^A$ be the limit action of the associated sofic approximation of $\Gamma$, see Lemma \ref{lem:inv}. Then, on the one hand $\theta^{\rm P}_*(\alpha)=0$ by Theorem \ref{thm:equalzero}. On the other side, if $\Gamma$ is stable in finite actions, then ${\rm P}$ is weakly contained in finite actions and hence $\theta^{\rm P}_*(\alpha) \neq 0$ by Theorem \ref{thm:neqzero}. This is a contradition and finishes the proof.
\end{proof}

\begin{remark}
We want to comment on the vanishing and non-vanishing of $\theta_*^P(\alpha)$ for particular measured $\Gamma$-Boolean algebras. Recall that for a p.m.p.\ preserving $\Gamma$-action $(X,\mu)$, we denote by ${\rm M}(X,\mu)$ its measure algebra.
By a result of Ab{\'e}rt--Weiss \cite{abertweiss}, the Bernoulli action $\Gamma \curvearrowright (\{0,1\}^{\Gamma},\nu^{\otimes \Gamma})$ is weakly contained in any other free p.m.p.\ action. In particular, since $\Gamma$ is residually finite, it is weakly contained in the pro-finite action $\Gamma \curvearrowright (\hat \Gamma,\mu_{\rm Haar})$. Hence, $$\theta^{{\rm M}(\{0,1\}^{\Gamma},\nu^{\otimes \Gamma})}_*(\alpha) \neq 0$$ by Theorem \ref{thm:neqzero}. On the other side, we have $\theta_*^{{\rm M}(X,\mu)}(\alpha)=0$ for $\Gamma \curvearrowright X=\{0,1\}^{\tilde \Gamma}/A$ with its natural probability measure and $\Gamma$-action by Theorem \ref{thm:equalzero}. In particular, $\Gamma \curvearrowright X=\{0,1\}^{\tilde \Gamma}/A$ is not weakly contained in the family of finite actions of $\Gamma.$
\end{remark}

There is also a notion of soficity of a p.m.p.\ $\Gamma$-action on a standard probability space, see \cite{MR2566316}*{Definition 1.3}, which amounts to weak containment in the limit action of a sofic approximation of the group $\Gamma$. In view of the previous remark, we obtain the following corollary.

\begin{corollary}
If $\Gamma$ satisfies Conditions $(i)$-$(iii)$ of Theorem \ref{thm:main} and is stable in finite actions, then the p.m.p.\ action $\Gamma \curvearrowright \{0,1\}^{\tilde \Gamma}/A$ is not sofic.
\end{corollary}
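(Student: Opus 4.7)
The plan is to run exactly the strategy of Theorem~\ref{thm:main} one level up, at the level of the p.m.p.\ action rather than at the level of the group $\tilde\Gamma$ itself. Assume for contradiction that $\Gamma \curvearrowright X \coloneqq \{0,1\}^{\tilde\Gamma}/A$, equipped with its natural quotient probability measure $\mu$, is a sofic p.m.p.\ action in the sense of \cite{MR2566316}. Unwinding the definition, this means that the measure algebra ${\rm M}(X,\mu)$, regarded as a measured $\Gamma$-Boolean algebra, embeds $\Gamma$-equivariantly and measure-preservingly into the limit action of some sofic approximation of $\Gamma$.

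Now I combine this with the stability hypothesis. Since $\Gamma$ is assumed to be stable in finite actions, Definition~\ref{def:stable} asserts that the limit action of \emph{any} sofic approximation of $\Gamma$ is itself weakly contained in the family $\mathcal{P}_{\rm fin}$ of finite $\Gamma$-actions. Chaining these two containments and passing to a diagonal ultraproduct of finite actions (so that a single metric ultraproduct absorbs both layers), one sees that ${\rm M}(X,\mu)$ is weakly contained in $\mathcal{P}_{\rm fin}$. Because $\Gamma$ satisfies hypotheses $(i)$-$(iii)$ of Theorem~\ref{thm:main}, in particular the cosystolic inequality in dimension~$2$ with coefficients in $A$ and the existence of a non-residually-finite central extension $\tilde\Gamma$, Theorem~\ref{thm:neqzero} then applies to ${\rm P} \coloneqq {\rm M}(X,\mu)$ and gives
\[ \theta^{{\rm M}(X,\mu)}_{*}(\alpha) \neq 0 \in {\rm H}^{2}(\Gamma,{\rm M}(X,\mu)(A)), \]
where $\alpha \in {\rm H}^2(\Gamma,A)$ is the class classifying $\tilde\Gamma$.

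Conversely, I want to show directly that this same class vanishes. The Bernoulli action $\tilde\Gamma \curvearrowright \{0,1\}^{\tilde\Gamma}$ is free, hence in particular $A$-free in the sense required by Theorem~\ref{thm:equalzero}, and under the natural identification the measure algebra of the quotient $X = \{0,1\}^{\tilde\Gamma}/A$ coincides with the subalgebra of $A$-invariants of ${\rm M}(\{0,1\}^{\tilde\Gamma})$. Setting ${\rm Q} \coloneqq {\rm M}(\{0,1\}^{\tilde\Gamma})$ and ${\rm P} \coloneqq {\rm Q}^{A} = {\rm M}(X,\mu)$, Theorem~\ref{thm:equalzero} then yields $\theta^{{\rm M}(X,\mu)}_{*}(\alpha) = 0$, directly contradicting the previous display and completing the proof.

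The only non-citational step is the transitivity of weak containment for measured $\Gamma$-Boolean algebras; this is essentially formal from the ultraproduct formalism laid out in Section~\ref{sec:2}, so I expect no real obstacle. In spirit the corollary is a restatement of the proof of Theorem~\ref{thm:main}, with the pair (vanishing of $\theta^{\rm P}_*(\alpha)$ on the sofic side, non-vanishing on the weak-containment-in-finite-actions side) now being forced to occur for the single object ${\rm M}(\{0,1\}^{\tilde\Gamma}/A)$ instead of an abstract limit action.
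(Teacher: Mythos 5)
Your argument is correct and is essentially the paper's own: the corollary is deduced from the preceding remark, which establishes $\theta_*^{{\rm M}(\{0,1\}^{\tilde\Gamma}/A)}(\alpha)=0$ via Theorem \ref{thm:equalzero} applied to the $A$-free Bernoulli $\tilde\Gamma$-action, so that by Theorem \ref{thm:neqzero} this action is not weakly contained in finite actions, while soficity of the action plus stability in finite actions would force such weak containment by composing the two embeddings. Your reconstruction matches this step for step, including the (formal) transitivity of containment through ultraproducts.
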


\begin{remark} \label{rem:discu}
There are only few groups $\Gamma$ for which \emph{all} p.m.p.\ $\Gamma$-actions are weakly contained in the family of finite actions. Any such group is clearly stable in finite actions. These include free groups, surface groups, hyperbolic 3-manifold groups \cite{MR3047068} and residually finite amenable groups but likely no groups with Kazhdan's property (T), see the discussion in \cite{MR2931406}*{p.\ 465}.
In particular, it is known that ${\rm SL}_n(\mathbb Z)$ fails to have this property for $n \geq 3$, see \cite{MR2077037}. Thus, potentially, if any Kazhdan group can be shown to be stable in finite actions, then there exists a non-sofic p.m.p.\ action.
Another test case for stability in finite actions is the group $\mathbb F_2 \times \mathbb F_2$. By the recent refutation of the Connes' Embedding Conjecture \cite{mipstar}, there do exist p.m.p.\ actions of $\mathbb F_2 \times \mathbb F_2$ that are not weakly contained in finite actions. Again, any such action would be non-sofic if $\mathbb F_2 \times \mathbb F_2$ was stable in finite actions.
\end{remark}

\subsection{A candidate for a non-sofic group}
\label{Chapter:Candidate}

In this section, we will recall the construction of torsionfree lattices in ${\rm PSp}_{2d}(\mathbb Q_p)$ for $d \geq 4$ and $p$ large that satisfy all conditions in Theorem \ref{thm:main}.

Condition $(i)$ is satisfied by Lemma \ref{lem:bruhat} and Mal'cevs well-known theorem stating that any finitely generated subgroup of a linear group is residually finite. Condition $(ii)$ is satisfied by Theorem \ref{CorPSpExpansion} for $p$ large enough. It remains to discuss Condition $(iii)$, where we borrow the discussion from \cite{MR4080477}*{Section 5.2}. We also refer to \cite{MR4080477} for further references.

Let $D$ be the quaternion algebra over $\mathbb{Z}$ and $R$ be a commutative, unital ring. Then we can define the \emph{quaternion ring} $D_{R}$ over $R$ as
\begin{equation*}
D_{R} \coloneqq  R\langle i,j,k\rangle /\langle i^{2} = j^{2} = k^{2} = ijk=1_{R} \rangle .
\end{equation*}
There is an involution on $D_{R}$
\begin{equation*}
\tau_{R} \colon D_{R} \rightarrow D_{R} , ( r_{0} +r_{1} \cdot i +r_{2} \cdot j +r_{3} \cdot k ) \mapsto ( r_{0} -r_{1} \cdot i -r_{2} \cdot j-r_{3} \cdot k)
\end{equation*}
and therefore we can define the canonical hermitian sesquilinearform
\begin{equation*}
h_{n} \colon D_{R}^{n} \times D_{R}^{n} \rightarrow D_{R}, ((x_{1}, \hdots ,x_{n} ),(y_{1}, \hdots ,y_{n})) \mapsto \sum_{i=1}^{n} x_{i} \cdot \tau_{R} (y_{i} ).
\end{equation*}
We define the algebraic group
\begin{align*}
\mathbf{G} (R) &\coloneqq {\rm SU}_{n} ( D_{R},h_{n} ) \\
&\coloneqq \{ A \in D_{R}^{n\times n} \mid \det (A)=1, \forall x,y\in D_{R}^{n} : h_{n}( x,y) = h_{n}( A\cdot x, A\cdot y) \} ,
\end{align*}
i.e. the $n\times n$-matrices with entries in $D_{R}$ of determinant $1$ and whose associated linear maps preserve the form $h_{n}$.

Note that $\mathbf{G}$ is an absolutely almost simple, simply connected $\mathbb{Q}$-algebraic group of type $C_{n}$. We refer an interested reader to \cite{MR1278263} for general algebraic groups and \cite{MR4080477} for this particular one. 
First of all, note that $\mathbf{G}( \mathbb{Q}_{p})$ is isomorphic to ${\rm Sp}_{2n} ( \mathbb{Q}_{p})$ and this group is not compact. Recall also that 
\begin{equation}
 \mathbf{G} ( \mathbb{Z} [ \nicefrac{1}{p} ] ) \leq \mathbf{G} ( \mathbb{R} ) \times \mathbf{G} ( \mathbb{Q}_{p} ) \label{EqCandidateLatticeEmbedding}
\end{equation}
is a lattice, see \cite{MR1278263}*{Chapter 5.4}. Moreover, since $\mathbf{G}( \mathbb{R})$ is isomorphic to ${\rm U} ( 2n ) \cap {\rm Sp}_{2n}( \mathbb{C})$ and hence compact, it follows that  $\mathbf{G} ( \mathbb{Z} [ \nicefrac{1}{p} ] ) \leq \mathbf{G} ( \mathbb{Q}_{p} )$ is also a lattice embedding. Our choice of $\Gamma$ is a torsion-free subgroup of  $\mathbf{G} ( \mathbb{Z} [ \nicefrac{1}{p} ] )$ of finite index. Note that any such $\Gamma$ is also a torsionfree lattice in ${\rm PSp}_{2d}(\mathbb Q_p),$ since the center of ${\rm Sp}_{2d}(\mathbb Q_p)$ is finite.

We continue with a discussion of the universal central extension of $\Gamma$, which was first studied by Deligne \cite{MR507760} and later Prasad, who showed universality \cite{MR2020658}.

\begin{theorem}
For every prime $p\geq 5$ there is a unique central extension
\begin{equation}
0 \longrightarrow \mathbb{Z} /( p-1) \cdot \mathbb{Z} \overset{\iota}{\longrightarrow} \mathbf{G} ( \mathbb{Q}_{p} ) ' \overset{\pi}{\longrightarrow} \mathbf{G} ( \mathbb{Q}_{p} ) \longrightarrow 0 \label{EqThmExtensionG(Q_p)}
\end{equation}
which is universal in the sense that whenever $A$ is a discrete abelian group and there exists a central extension of topological groups
\begin{equation*}
0 \longrightarrow A \longrightarrow E \longrightarrow \mathbf{G} ( \mathbb{Q}_{p} ) \longrightarrow 0,
\end{equation*}
then there are continuous group homomorphisms such that the following diagram commutes
\begin{equation*}
\begin{tikzcd}
0 \ar[r] & A \ar[r] \ar[d] & E \ar[r] \ar[d] & \mathbf{G} ( \mathbb{Q}_{p} ) \ar[r] \ar[d, equal] & 0 \\
0 \ar[r] & \mathbb{Z} /( p-1) \cdot \mathbb{Z} \ar[r, "\iota"] & \mathbf{G} ( \mathbb{Q}_{p} ) ' \ar[r, "\pi"] & \mathbf{G} ( \mathbb{Q}_{p} ) \ar[r] & 0
\end{tikzcd} .
\end{equation*} \label{ThmExtensionG(Q_P)}
\end{theorem}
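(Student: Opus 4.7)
The plan is to reduce to the theory of topological universal central extensions of simply connected isotropic semisimple groups over non-archimedean local fields, a framework originating with Moore and refined by Matsumoto, Deligne, and Prasad. I would \emph{not} attempt to reconstruct this deep theory but simply apply it.

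First I would verify that we are in the right setting: the group $\mathbf{G}$ is simply connected, absolutely almost simple of type $C_n$, and $\mathbb{Q}_p$-isotropic, since $\mathbf{G}(\mathbb{Q}_p) \cong {\rm Sp}_{2n}(\mathbb{Q}_p)$ has $\mathbb{Q}_p$-rank $n \geq 1$. This places us in the scope of Moore's theorem on the topological fundamental group of a $p$-adic Lie group, which produces a unique universal topological central extension whose kernel is canonically identified with the group $\mu(\mathbb{Q}_p)$ of roots of unity in $\mathbb{Q}_p$. Second, I would compute this kernel: Hensel's lemma applied to $x^{p-1} - 1$ produces the Teichmüller system and hence a cyclic subgroup of order $p-1$ in $\mathbb{Z}_p^{\times}$; for $p$ odd, no additional $p$-power roots of unity lie in $\mathbb{Q}_p$, so $\mu(\mathbb{Q}_p) \cong \mathbb{Z}/(p-1)\mathbb{Z}$. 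This pins down the kernel in \eqref{EqThmExtensionG(Q_p)}.

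Finally, existence of the extension in the precise form asserted is due to Deligne \cite{MR507760}, in his work on the congruence subgroup problem, while the universality property against arbitrary discrete abelian $A$ with continuous lift is the content of Prasad \cite{MR2020658}. Thus the proof reduces to citing these two references, combined with the elementary computation of $\mu(\mathbb{Q}_p)$ above. The main obstacle, were one to proceed from scratch, would be the underlying Moore--Matsumoto classification, whose input (Matsumoto's presentation of $K_2$ of a field by Steinberg-type generators, Moore reciprocity laws, and detailed structure theory of Bruhat--Tits buildings) is far outside the scope of the present paper; we treat the combined theorem of Deligne and Prasad as a black box.
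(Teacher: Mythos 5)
Your proposal matches the paper's treatment: the paper states this theorem as a result of Deligne (existence of the extension) and Prasad (universality) and offers no independent proof, exactly as you do by citing \cite{MR507760} and \cite{MR2020658} as a black box. Your added remarks on the isotropic simply connected setting and the computation $\mu(\mathbb{Q}_p)\cong\mathbb{Z}/(p-1)\mathbb{Z}$ are correct and consistent with the standard Moore--Matsumoto--Prasad framework the paper implicitly relies on.
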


The pull-back of this central extension to the lattice $\Gamma \leq  \mathbf{G} ( \mathbb{Q}_{p} )$ is denoted by $\tilde{\Gamma} \coloneqq \pi^{-1} ( \Gamma )$.
The final ingredient is the following theorem, which provides Condition $(iii)$ of Theorem \ref{thm:main}.

\begin{theorem}[following Deligne, see \cite{MR4080477}]
Let $p\geq 5$ and $A$ the cyclic group of order $p-1$. For the torsion-free lattice $\Gamma \leq {\rm PSp}_{2d} (\mathbb{Q}_{p})$ discussed above, the central extension $\tilde{\Gamma}$
\begin{equation*}
0 \longrightarrow A \longrightarrow \tilde{\Gamma} \longrightarrow \Gamma \rightarrow 0 \label{EqExtGamma}
\end{equation*}
is not residually finite. \label{resfin}
\end{theorem}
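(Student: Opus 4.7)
The plan is to argue by contradiction. Assume that $\tilde{\Gamma}$ is residually finite. Since $\iota(A)$ is a finite subgroup of $\tilde{\Gamma}$, for each non-trivial $a \in \iota(A)$ we can find a finite-index normal subgroup of $\tilde{\Gamma}$ avoiding $a$; intersecting the finitely many such subgroups produces a finite-index normal subgroup $N \trianglelefteq \tilde{\Gamma}$ with $N \cap \iota(A) = \{1\}$. Setting $\Lambda \coloneqq \pi(N)$, the preimage $\pi^{-1}(\Lambda) = N \cdot \iota(A)$ is an internal direct product $N \times A$, so the central extension splits over the finite-index subgroup $\Lambda \leq \Gamma$. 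Equivalently, the restriction $\alpha|_\Lambda \in {\rm H}^2(\Lambda, A)$ of the classifying cohomology class vanishes. This is essentially the same mechanism as in Lemma \ref{lem:nonzero}, used now in reverse to extract strong rigidity consequences from residual finiteness.

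The next step is to invoke the congruence subgroup property for the $S$-arithmetic group $\mathbf{G}(\mathbb{Z}[\nicefrac{1}{p}])$. Since $\mathbf{G}$ is an absolutely almost simple, simply connected $\mathbb{Q}$-algebraic group of type $C_n$ with $n = d \geq 4$ and $\mathbf{G}(\mathbb{R})$ is compact, the $S$-arithmetic higher-rank case of the congruence subgroup problem applies: every finite-index subgroup of $\Gamma$ contains a principal $S$-congruence subgroup $\Gamma(\mathfrak{a})$ for some nontrivial ideal $\mathfrak{a} \subset \mathbb{Z}[\nicefrac{1}{p}]$. In particular, after possibly shrinking $\Lambda$, we may assume $\Lambda = \Gamma(\mathfrak{a})$, so that the universal central extension of Theorem \ref{ThmExtensionG(Q_P)} splits upon restriction to a congruence subgroup of $\mathbf{G}(\mathbb{Z}[\nicefrac{1}{p}])$.

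The remaining step, which is the essence of Deligne's argument, is to rule this out using the non-triviality of the metaplectic kernel. The universal central extension $\mathbf{G}(\mathbb{Q}_p)'$ realizes (the local $p$-adic component of) the metaplectic cover of the symplectic group, whose defining cocycle — by Matsumoto's construction, Moore's computation of $\mathrm{H}^2$ for simply connected split groups over local fields, and the subsequent work of Prasad--Raghunathan — restricts to a cohomologically non-trivial class on every $S$-congruence subgroup of $\mathbf{G}(\mathbb{Z}[\nicefrac{1}{p}])$. This contradicts the splitting obtained above and completes the proof. The main obstacle here is not the elementary group theory of the first paragraph but this deep classical input: the combination of the congruence subgroup property with the non-vanishing of the metaplectic kernel is precisely what forces the failure of residual finiteness, and simply quoting \cite{MR507760} together with \cite{MR2020658} as packaged in \cite{MR4080477}*{Section 5.2} is the cleanest way to conclude.
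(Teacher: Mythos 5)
The paper itself offers no proof of this theorem; it is quoted from \cite{MR4080477}*{Section 5.2}, which packages Deligne's argument \cite{MR507760} together with Prasad's universality result \cite{MR2020658}. Your first two steps are correct and constitute the elementary part of that argument: residual finiteness of $\tilde{\Gamma}$ would force the extension to split over a finite-index subgroup $\Lambda \leq \Gamma$ (your normal-core-and-intersect construction is fine), and the congruence subgroup property for the $S$-arithmetic group $\mathbf{G}(\mathbb{Z}[\nicefrac{1}{p}])$ (type $C_d$, $S$-rank $d \geq 2$, finite and central congruence kernel) lets you replace $\Lambda$ by a principal congruence subgroup $\Gamma(\mathfrak{a})$.

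The gap is in your third step. The assertion that the metaplectic class ``restricts to a cohomologically non-trivial class on every $S$-congruence subgroup'' is, in the presence of the CSP, logically equivalent to the theorem being proved, so your steps 1 and 2 have merely reformulated the statement rather than reduced it; and the justification you give for the reformulated statement does not work. Matsumoto's and Moore's computations concern the \emph{continuous} ${\rm H}^2$ of $\mathbf{G}(\mathbb{Q}_p)$, whereas $\Gamma(\mathfrak{a})$ is a \emph{discrete} subgroup of $\mathbf{G}(\mathbb{Q}_p)$ (recall $\mathbf{G}(\mathbb{R})$ is compact, so $\Gamma$ is a lattice there): topological non-triviality of a central extension of the ambient group says nothing a priori about the non-vanishing of its restriction to a lattice. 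The actual mechanism in Deligne's proof is global and runs through the profinite completion: the CSP identifies $\widehat{\Gamma}$ with the congruence completion, an open compact subgroup of $\mathbf{G}(\mathbb{A}_f^{S})$, and the resulting central extension $\widehat{\tilde{\Gamma}} \to \widehat{\Gamma}$ by the closure of the image of $A$ is then constrained by the computation of the metaplectic kernel $M(S,\mathbf{G})$ (Prasad--Raghunathan), which is far too small to accommodate $A = \mathbb{Z}/(p-1)\mathbb{Z}$ for $p \geq 5$; hence $A$ cannot inject into $\widehat{\tilde{\Gamma}}$ and $\tilde{\Gamma}$ is not residually finite. Your final sentence --- deferring to \cite{MR4080477}*{Section 5.2} --- does land where the paper lands, but the intermediate justification you propose for the key step is not the correct one.
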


Thus, taking $p$ large enough in order to allow for the application of Theorem \ref{CorPSpExpansion}, we see that all conditions of Theorem \ref{thm:main} are met.

\section*{Acknowledgments}

The ideas for this paper stem from 2018 and were communicated back then among various experts and were presented at a conference in Warwick in March 2019. The second named author thanks Alon Dogon for corrections and Adrian Ioana for a discussion on Section \ref{sec:stable}. We both thank Vadim Alekseev for inspiring discussions. The results of this paper also appear in the doctoral thesis of the first named author, see \cite{thesisgohla}.

\begin{bibdiv}
\begin{biblist}

\bib{abertweiss}{article}{
 Author = {Ab{\'e}rt, Mikl{\'o}s},
 author ={Weiss, Benjamin},
 Title = {Bernoulli actions are weakly contained in any free action},
 Journal = {Ergodic Theory Dyn. Syst.},
 Volume = {33},
 Number = {2},
 Pages = {323--333},
 Year = {2013},
}

\bib{AB}{book}{
   author={Abramenko, Peter},
   author={Brown, Kenneth S.},
   title={Buildings},
   series={Graduate Texts in Mathematics},
   volume={248},
   note={Theory and applications},
   publisher={Springer, New York},
   date={2008},
   pages={xxii+747},
}

\bib{MR3350728}{article}{
   author={Arzhantseva, Goulnara},
   author={P\u{a}unescu, Liviu},
   title={Almost commuting permutations are near commuting permutations},
   journal={J. Funct. Anal.},
   volume={269},
   date={2015},
   number={3},
   pages={745--757},
}

\bib{MR4027744}{article}{
   author={Becker, Oren},
   author={Lubotzky, Alexander},
   title={Group stability and Property (T)},
   journal={J. Funct. Anal.},
   volume={278},
   date={2020},
   number={1},
   pages={108298, 20},
}

\bib{MR3999445}{article}{
   author={Becker, Oren},
   author={Lubotzky, Alexander},
   author={Thom, Andreas},
   title={Stability and invariant random subgroups},
   journal={Duke Math. J.},
   volume={168},
   date={2019},
   number={12},
   pages={2207--2234},
}

\bib{borel}{book}{
   author={Borel, Armand},
   title={Linear algebraic groups},
   series={Graduate Texts in Mathematics},
   volume={126},
   edition={2},
   publisher={Springer-Verlag, New York},
   date={1991},
   pages={xii+288},
 
}

\bib{MR4105530}{article}{
   author={Bowen, Lewis},
   author={Burton, Peter},
   title={Flexible stability and nonsoficity},
   journal={Trans. Amer. Math. Soc.},
   volume={373},
   date={2020},
   number={6},
   pages={4469--4481},
}

\bib{MR3047068}{article}{
   author={Bowen, Lewis},
   author={Tucker-Drob, Robin D.},
   title={On a co-induction question of Kechris},
   journal={Israel J. Math.},
   volume={194},
   date={2013},
   number={1},
   pages={209--224},
}

\bib{MR1324339}{book}{
   author={Brown, Kenneth S.},
   title={Cohomology of groups},
   series={Graduate Texts in Mathematics},
   volume={87},
   note={Corrected reprint of the 1982 original},
   publisher={Springer-Verlag, New York},
   date={1994},
   pages={x+306},
}

\bib{MR4138908}{article}{
   author={Burton, Peter J.},
   author={Kechris, Alexander S.},
   title={Weak containment of measure-preserving group actions},
   journal={Ergodic Theory Dynam. Systems},
   volume={40},
   date={2020},
   number={10},
   pages={2681--2733},
}

\bib{chapman2023stability}{article}{
    title={Stability of Homomorphisms, Coverings and Cocycles II: Examples, Applications and Open problems}, 
    author={Michael Chapman and Alexander Lubotzky},
    year={2023},
    eprint={arXiv:2311.06706},
}
\bib{MR4080477}{article}{
   author={De Chiffre, Marcus},
   author={Glebsky, Lev},
   author={Lubotzky, Alexander},
   author={Thom, Andreas},
   title={Stability, cohomology vanishing, and nonapproximable groups},
   journal={Forum Math. Sigma},
   volume={8},
   date={2020},
}
\bib{MR507760}{article}{
   author={Deligne, Pierre},
   title={Extensions centrales non r\'{e}siduellement finies de groupes
   arithm\'{e}tiques},
   language={French, with English summary},
   journal={C. R. Acad. Sci. Paris S\'{e}r. A-B},
   volume={287},
   date={1978},
   number={4},
   pages={A203--A208},
}

\bib{dogon}{article}{
   author={Dogon, Alon},
   title={Flexible Hilbert-Schmidt stability versus hyperlinearity for
   property (T) groups},
   journal={Math. Z.},
   volume={305},
   date={2023},
   number={4},
   pages={Paper No. 58, 20},
}

\bib{MR2566316}{article}{
   author={Elek, G\'{a}bor},
   author={Lippner, G\'{a}bor},
   title={Sofic equivalence relations},
   journal={J. Funct. Anal.},
   volume={258},
   date={2010},
   number={5},
   pages={1692--1708},
}

\bib{erschler}{article}{
   author={Erschler, Anna},
   title={Not residually finite groups of intermediate growth,
   commensurability and non-geometricity},
   journal={J. Algebra},
   volume={272},
   date={2004},
   number={1},
   pages={154--172},
}

\bib{MR3536553}{article}{
   author={Evra, Shai},
   author={Kaufman, Tali},
   title={Bounded degree cosystolic expanders of every dimension},
   conference={
      title={STOC'16---Proceedings of the 48th Annual ACM SIGACT Symposium
      on Theory of Computing},
   },
   book={
      publisher={ACM, New York},
   },
   date={2016},
   pages={36--48},
}
\bib{MR2500002}{article}{
   author={Glebsky, Lev},
   author={Rivera, Luis Manuel},
   title={Almost solutions of equations in permutations},
   journal={Taiwanese J. Math.},
   volume={13},
   date={2009},
   number={2A},
   pages={493--500},
}

\bib{thesisgohla}{thesis}{
    author={Gohla, Lukas},
    title={Topics in Group Stability},
    type={Doctoral thesis at TU Dresden},
    year={2023},    
}

\bib{MR1694588}{article}{
   author={Gromov, Mikhail},
   title={Endomorphisms of symbolic algebraic varieties},
   journal={J. Eur. Math. Soc. (JEMS)},
   volume={1},
   date={1999},
   number={2},
   pages={109--197},
}

\bib{MR4134896}{article}{
   author={Ioana, Adrian},
   title={Stability for product groups and property ($\tau$)},
   journal={J. Funct. Anal.},
   volume={279},
   date={2020},
   number={9},
   pages={108729, 32},
}

\bib{MR4434538}{article}{
   author={Ioana, Adrian},
   title={On sofic approximations of $\mathbb{F}_2\times\mathbb{F}_2$},
   journal={Ergodic Theory Dynam. Systems},
   volume={42},
   date={2022},
   number={7},
   pages={2333--2351},
}

\bib{mipstar}{article}{
    title={${\rm MIP}^*={\rm RE}$}, 
    author={Zhengfeng Ji}, 
    author={Anand Natarajan}, 
    author={Thomas Vidick}, 
    author={John Wright},
    author={Henry Yuen},
    year={2020},
    eprint={arXiv:2001.04383},
}

\bib{kaufman2018cosystolic}{article}{
  author={Kaufman, Tali},
  author={Mass, David},
  title={Cosystolic Expanders over any Abelian Group.},
  journal={Electron. Colloquium Comput. Complex.},
  volume={25},
  pages={134},
  year={2018},
}

\bib{kechris}{book}{
   author={Kechris, Alexander S.},
   title={Global aspects of ergodic group actions},
   series={Mathematical Surveys and Monographs},
   volume={160},
   publisher={American Mathematical Society, Providence, RI},
   date={2010},
   pages={xii+237},
}

\bib{MR2931406}{article}{
   author={Kechris, Alexander S.},
   title={Weak containment in the space of actions of a free group},
   journal={Israel J. Math.},
   volume={189},
   date={2012},
   pages={461--507},
}

\bib{minsky}{article}{
   author={Nir Lazarovich},
   author={Arie Levit},
   author={Yair Minsky},
   title={Surface groups are flexibly stable},
   year={2019},
    eprint={arXiv:1901.07182},
}

\bib{MR3966743}{article}{
   author={Lubotzky, Alexander},
   title={High dimensional expanders},
   conference={
      title={Proceedings of the International Congress of
      Mathematicians---Rio de Janeiro 2018. Vol. I. Plenary lectures},
   },
   book={
      publisher={World Sci. Publ., Hackensack, NJ},
   },
   date={2018},
   pages={705--730},
   review={\MR{3966743}},
}

\bib{LMM}{article}{
   author={Lubotzky, Alexander},
   author={Meshulam, Roy},
   author={Mozes, Shahar},
   title={Expansion of building-like complexes},
   journal={Groups Geom. Dyn.},
   volume={10},
   date={2016},
   number={1},
   pages={155--175},
}

\bib{MR2077037}{article}{
   author={Lubotzky, Alexander},
   author={Shalom, Yehuda},
   title={Finite representations in the unitary dual and Ramanujan groups},
   conference={
      title={Discrete geometric analysis},
   },
   book={
      series={Contemp. Math.},
      volume={347},
      publisher={Amer. Math. Soc., Providence, RI},
   },
   date={2004},
   pages={173--189},
}

\bib{MR3755725}{article}{
   author={Oppenheim, Izhar},
   title={Local spectral expansion approach to high dimensional expanders
   Part I: Descent of spectral gaps},
   journal={Discrete Comput. Geom.},
   volume={59},
   date={2018},
   number={2},
   pages={293--330},
}

\bib{pestov}{article}{
   author={Pestov, Vladimir G.},
   title={Hyperlinear and sofic groups: a brief guide},
   journal={Bull. Symbolic Logic},
   volume={14},
   date={2008},
   number={4},
   pages={449--480},
}

\bib{MR1278263}{book}{
   author={Platonov, Vladimir},
   author={Rapinchuk, Andrei},
   title={Algebraic groups and number theory},
   series={Pure and Applied Mathematics},
   volume={139},
   note={Translated from the 1991 Russian original by Rachel Rowen},
   publisher={Academic Press, Inc., Boston, MA},
   date={1994},
   pages={xii+614},
}

\bib{MR2020658}{article}{
   author={Prasad, Gopal},
   title={Deligne's topological central extension is universal},
   journal={Adv. Math.},
   volume={181},
   date={2004},
   number={1},
   pages={160--164},
}

\bib{MR919287}{article}{
   author={Ross, David},
   title={Automorphisms of the Loeb algebra},
   journal={Fund. Math.},
   volume={128},
   date={1987},
   number={1},
   pages={29--36},
}

\bib{MR3966829}{article}{
   author={Thom, Andreas},
   title={Finitary approximations of groups and their applications},
   conference={
      title={Proceedings of the International Congress of
      Mathematicians---Rio de Janeiro 2018. Vol. III. Invited lectures},
   },
   book={
      publisher={World Sci. Publ., Hackensack, NJ},
   },
   date={2018},
   pages={1779--1799},
}


\bib{MR1803462}{article}{
   author={Weiss, Benjamin},
   title={Sofic groups and dynamical systems},
   note={Ergodic theory and harmonic analysis (Mumbai, 1999)},
   journal={Sankhy\={a} Ser. A},
   volume={62},
   date={2000},
   number={3},
}

\end{biblist}
\end{bibdiv}

\end{document}